\DeclareMathOperator{\sys}{sys}
\newcommand{\p}{\mathcal{P}}
\newcommand{\R}{\mathbb{R}}
\newcommand{\Z}{\mathbb{Z}}
\renewcommand{\S}{\mathcal{S}}
\newcommand{\T}{\mathcal{T}}
\newtheorem{theorem}{Theorem}[section]
\theoremstyle{plain}
\newtheorem{lem}[theorem]{Lemma}
\newtheorem{cor}[theorem]{Corollary}
\numberwithin{equation}{subsection}
\theoremstyle{definition}
\theoremstyle{definition}
\newtheorem{rem}[theorem]{Remark}
\newtheorem{ques}[theorem]{Question}
\title[Building metrics and applications to lifting]{Building hyperbolic metrics suited to closed curves and applications to lifting simply}
\author{Tarik Aougab, Jonah Gaster, Priyam Patel, Jenya Sapir}
\begin{document}

\address{
\newline Department of Mathematics\\ Brown University\\
\newline \it{E-mail address}: \tt{tarik\_aougab@brown.edu}
}

\address{
\newline Department of Mathematics\\ Boston College\\
\newline \it{E-mail address}: \tt{gaster@bc.edu}
}

\address{
\newline Department of Mathematics\\ Purdue University\\
\newline \it{E-mail address}: \tt{patel376@purdue.edu}
}

\address{
\newline Department of Mathematics\\ University of Illinois\\
\newline \it{E-mail address}: \tt{jsapir2@illinois.edu}
}

\date{March 18, 2016}

\begin{abstract} Let $\gamma$ be an essential closed curve with at most $k$ self-intersections on a surface $\S$ with negative Euler characteristic. In this paper, we construct a hyperbolic metric $\rho$ for which $\gamma$ has length at most $M \cdot \sqrt{k}$, where $M$ is a constant depending only on the topology of $\S$. Moreover, the injectivity radius of $\rho$ is at least $1/(2\sqrt{k})$. This yields linear upper bounds in terms of self-intersection number on the minimum degree of a cover to which $\gamma$ lifts as a simple closed curve (i.e.~lifts simply). We also show that if $\gamma$ is a closed curve with length at most $ L$ on a cusped hyperbolic surface $\S$, then there exists a cover of $\S$ of degree at most $N \cdot L \cdot e^{L/2}$ to which $\gamma$ lifts simply, for $N$ depending only on the topology of $\S$. 
\end{abstract}

\maketitle

\section{Introduction}

Let $\S$ be an orientable surface of negative Euler characteristic and of finite type. It is a consequence of a well-known result of P.~Scott \cite{Scott} that every closed curve $\gamma$ on $\S$ lifts to a simple closed curve in a finite degree cover $\widetilde{\S}_\gamma$ of $\S$; we say that $\gamma$ \emph{lifts simply} to $\widetilde{\S}_\gamma$. For a closed curve $\gamma$ on $\S$, let $\deg(\gamma)$ be the smallest degree of a cover where $\gamma$ lifts simply. Recently, there has been an effort to prove effective versions of Scott's result by obtaining bounds on $\deg(\gamma)$ \cite{gaster1, Gupta-Kapovich, Patel}. One of the main goals of this paper is to study the dependence of $\deg(\gamma)$ on two other notions of complexity; the geometric self-intersection number $i(\gamma, \gamma)$ of the curve, and, fixing a complete hyperbolic metric $\rho$ on $\S$, the length $\ell_\rho(\gamma)$.

We are interested in the following two functions, defined in \cite{Gupta-Kapovich}: 
\begin{align*}
f_\rho(L) &= \max \left \{\deg(\gamma) : \ell_\rho(\gamma) \leq L \right \} ~, \text{ and} \\
f_\S(k) &= \max \left \{\deg(\gamma) : i(\gamma, \gamma) \leq k \right \}~.
\end{align*}

\noindent In order to study these functions, we consider the relationship between the length and self-intersection number of closed curves on hyperbolic surfaces. This has been studied extensively in a variety of different contexts  (see e.g. \cite{Basmajian, Chas-Phillips1, Chas-Phillips2, Hempel-lengths, lalley, Nakanishi, Rivin}).

Our main contribution to this story is Theorem \ref{thm: shortest metric}, in which we produce a metric $\rho$ for each curve $\gamma$ of self-intersection at most $k$ on $\S$ so that $\gamma$ has $\rho$-length at most a constant times $\sqrt{k}$. Theorem \ref{thm: shortest metric} implies upper bounds for curve degrees (see Theorem \ref{thm: lifts simply in n}), and lower bounds for $f_\S$ are provided by \cite[Thm.~1.2]{gaster1}. 

Recall that $f = \Theta(g)$ means that there are constants $c_1$ and $c_2$ so that $c_1 g(x) \le f(x) \le c_2 g(x)$ for all $x$. We obtain:

\begin{cor}[Linear growth of $f_\S$]
\label{cor: linear characterization}
We have $f_\S(k)= \Theta(k)$, with constants that depend only on the topology of $\S$.
\end{cor}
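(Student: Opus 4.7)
The plan is to obtain matching linear upper and lower bounds for $f_\S$, with the upper bound coming from the main construction of the paper and the lower bound imported from the literature.

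For the upper bound, I would fix a closed curve $\gamma$ with $i(\gamma,\gamma) \leq k$ and feed it into Theorem \ref{thm: shortest metric} to produce a tailored hyperbolic metric $\rho = \rho_\gamma$ satisfying $\ell_\rho(\gamma) \leq M \sqrt{k}$ and $\mathrm{injrad}(\rho) \geq 1/(2\sqrt{k})$. I would then apply Theorem \ref{thm: lifts simply in n}, which by its billing converts joint length-and-injectivity-radius data into a bound on $\deg(\gamma)$. Since such a bound should scale essentially like $\ell_\rho(\gamma) / \mathrm{injrad}(\rho)$, one obtains $\deg(\gamma) \leq C \cdot M \sqrt{k} \cdot 2\sqrt{k} = 2CM \cdot k$, with constants depending only on the topology of $\S$. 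Taking the maximum over all $\gamma$ with $i(\gamma,\gamma) \leq k$ then gives $f_\S(k) \leq c_2 \cdot k$.

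For the lower bound, the citation \cite[Thm.~1.2]{gaster1} exhibits a family of closed curves $\gamma_k$ on $\S$ for which $i(\gamma_k,\gamma_k) \leq k$ and $\deg(\gamma_k)$ grows at least linearly in $k$, which directly gives $f_\S(k) \geq c_1 k$ for some $c_1 > 0$ depending only on $\S$. Combining with the previous paragraph yields $c_1 k \leq f_\S(k) \leq c_2 k$, which is the assertion $f_\S(k) = \Theta(k)$.

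The corollary itself is a direct combination of two inequalities, so there is no genuine obstacle here. The real difficulty is lodged in Theorem \ref{thm: shortest metric}: one must simultaneously shrink $\gamma$ and control the systole, which are in tension because the natural deformations that shorten a self-intersecting curve risk pinching nearby simple curves to zero length. The inverse-scaling lower bound $\mathrm{injrad}(\rho) \geq 1/(2\sqrt{k})$ is precisely what prevents the length-to-degree estimate in Theorem \ref{thm: lifts simply in n} from losing an additional factor of $\sqrt{k}$ and pushing the bound to superlinear order; keeping those two quantities in balance is where the work lives.
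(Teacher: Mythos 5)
Your proposal is correct and follows the paper's own route: the upper bound is exactly Theorem \ref{thm: lifts simply in n} (whose proof you essentially reconstruct --- the tailored metric of Theorem \ref{thm: shortest metric} followed by a Lipschitz map to $\rho_0$ with constant comparable to $1/\sys(\rho)$ and then Patel's linear length-to-degree bound), and the lower bound $f_\S(k)\ge k+1$ is imported from \cite[Thm.~1.2]{gaster1}. The only cosmetic point is that Theorem \ref{thm: lifts simply in n} is itself the statement $f_\S(k)\le C_4\cdot k$, so it can be cited directly rather than re-derived.
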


In Theorem \ref{thm:cusped lift simply}, we analyze the relationship of the length of a curve on a cusped hyperbolic surface to the length of the corresponding curve when the cusps have been `opened' to geodesic boundary. Together with \cite[Thm.~1.1]{Patel} we obtain upper bounds for $f_\rho$ when $(\S,\rho)$ has cusps, and lower bounds are provided by \cite[Thm.~1.4]{gaster1}. We obtain:

\begin{cor}[Exponential growth of $f_\rho$ in cusped case]
\label{cor: exp characterization}
Suppose that $(\S, \rho)$ is a hyperbolic surface with at least one cusp. Then there exists  $L_0(\rho) > 0$ and constants $C_1$ and $C_2$ depending only on the topology of $\S$ so that for each $L \geq L_0$, 
\[ C_1 \cdot e^\frac{L}{2} \leq f_\rho(L) \leq C_2 \cdot L \cdot e^\frac{L}{2}~. \]
\end{cor}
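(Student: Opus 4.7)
The plan is to handle the upper and lower bounds separately, combining Theorem \ref{thm:cusped lift simply} with Patel's result for the upper bound, and quoting Gaster's construction for the lower bound.

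For the upper bound, I would first pass from the cusped metric $\rho$ to a hyperbolic metric $\rho'$ on $\S$ in which each cusp has been opened to a geodesic boundary component (with the opened boundary lengths a fixed choice depending only on the topology of $\S$). Given a closed curve $\gamma$ on $\S$ with $\ell_\rho(\gamma) \leq L$, Theorem \ref{thm:cusped lift simply} controls how much the length can inflate under this cusp-opening: it yields $\ell_{\rho'}(\gamma) \leq A \cdot L \cdot e^{L/2}$ for a constant $A$ depending only on the topology. Since $(\S, \rho')$ has positive injectivity radius bounded below by a topological constant, \cite[Thm.~1.1]{Patel} applies and produces a cover of degree at most a constant times $\ell_{\rho'}(\gamma)$ in which $\gamma$ lifts simply. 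Composing the two estimates gives $\deg(\gamma) \leq C_2 \cdot L \cdot e^{L/2}$, and taking the maximum over all such $\gamma$ yields the desired bound on $f_\rho(L)$.

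For the lower bound, I would appeal directly to \cite[Thm.~1.4]{gaster1}, which supplies a family of closed curves $\{\gamma_n\}$ on $(\S, \rho)$ with $\ell_\rho(\gamma_n) \to \infty$ and $\deg(\gamma_n) \geq D \cdot e^{\ell_\rho(\gamma_n)/2}$, where $D$ depends only on the topology of $\S$. To turn this into a bound valid for every $L \geq L_0$, I would choose $L_0$ large enough that for each $L \geq L_0$ there exists $\gamma_n$ with $\ell_\rho(\gamma_n) \in [L - c, L]$, where $c$ is a fixed spacing constant (coming from the Dehn-twist parameter used in Gaster's construction). Then $f_\rho(L) \geq \deg(\gamma_n) \geq (D\, e^{-c/2})\, e^{L/2}$, so $C_1 = D\, e^{-c/2}$.

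The hard part is not the corollary itself but Theorem \ref{thm:cusped lift simply}, on which the upper bound rests. The subtlety is that a closed curve of bounded $\rho$-length on a cusped surface can wind an exponentially large number of times around a cusp: descending to depth $d$ into a cusp and returning costs length only $\sim 2d$, yet permits $\sim e^d$ trips around the cusp's meridian. Under cusp-opening, each such winding contributes a bounded amount of length along the new geodesic boundary, producing the $e^{L/2}$ factor, while the extra linear factor of $L$ comes from having possibly many such deep excursions. Making this length-transfer estimate precise, with constants that depend only on the topology of $\S$, is the substantive work; once Theorem \ref{thm:cusped lift simply} is available, the corollary itself is short.
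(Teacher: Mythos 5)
Your proposal is correct and follows the paper's route exactly: the upper bound comes from the cusp-opening length comparison (which in the paper is Theorem \ref{thm:difference in lengths}; Theorem \ref{thm:cusped lift simply} is already the degree bound obtained by combining that comparison with \cite[Thm.~1.1]{Patel}), and the lower bound is quoted from \cite[Thm.~1.4]{gaster1}. Your identification of the winding-around-cusps phenomenon as the source of the $e^{L/2}$ factor is precisely the substance of the paper's proof of Theorem \ref{thm:difference in lengths}, so the only correction is the label: the length-transfer estimate you attribute to Theorem \ref{thm:cusped lift simply} lives in Theorem \ref{thm:difference in lengths}.
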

 
\noindent A natural question left open in our work is which of these two bounds is sharp.

Finally, Theorem \ref{thm: shortest metric} informs the quantitative understanding of infima of length functions associated to curves on $\S$. Note that there are only finitely many mapping class group orbits of closed curves with $k$ self-intersections on $\S$. Each such orbit determines a length function on the moduli space of hyperbolic structures on $\S$, and this length function has an infimum. Let $m_k(\S)$ and $\overline{m}_k(\S)$ indicate the minimum and maximum, respectively, of the infima among mapping class group orbits of curves of self-intersection $k$.

In \cite[Cor.~1.4]{Basmajian}, Basmajian shows that $m_k(\S)$ is $\Theta(\log k)$, and it's a consequence of the construction in the first part of \cite[Cor.~1.3]{Basmajian} that $\overline{m}_k(\S)$ grows at least like $\sqrt{k}$. Theorem \ref{thm: shortest metric} immediately implies:

\begin{cor} \label{max infimum} We have $\overline{m}_k(\S)= \Theta( \sqrt{k})$.
\end{cor}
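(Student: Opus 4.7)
The plan is to deduce both the upper and lower bounds for $\overline{m}_k(\S)$ from results already assembled in the excerpt. The upper bound will follow directly from Theorem \ref{thm: shortest metric}, while the lower bound is provided by the construction in \cite[Cor.~1.3]{Basmajian} mentioned just before the statement.

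For the upper bound, I would fix a mapping class group orbit of curves with self-intersection $k$ and pick any representative $\gamma$. Theorem \ref{thm: shortest metric} provides a hyperbolic metric $\rho$ on $\S$ in which $\ell_\rho(\gamma) \leq M\sqrt{k}$, where $M$ depends only on the topology of $\S$. The length function associated to the free homotopy class of $\gamma$ descends to a well-defined function on moduli space, whose infimum is bounded above by its value at the single point $[\rho]$. Hence the infimum is at most $M\sqrt{k}$. Taking the maximum over the finitely many mapping class group orbits of curves with self-intersection $k$ preserves this bound, yielding
\[
\overline{m}_k(\S) \leq M\sqrt{k}~.
\]

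For the lower bound, the construction in the first part of \cite[Cor.~1.3]{Basmajian} exhibits, for each $k$, a specific mapping class group orbit of curves with $k$ self-intersections whose infimum length over moduli space grows at least like $\sqrt{k}$. Such an orbit contributes to the maximum defining $\overline{m}_k(\S)$, giving $\overline{m}_k(\S) \geq c\sqrt{k}$ for some constant $c$ depending only on the topology of $\S$. Combining the two inequalities yields the desired conclusion $\overline{m}_k(\S) = \Theta(\sqrt{k})$.

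There is essentially no obstacle to carrying out this plan: the corollary is a packaging statement combining Theorem \ref{thm: shortest metric} (the serious content) with a lower bound already in the literature. In particular, the fact that the metric $\rho$ produced by Theorem \ref{thm: shortest metric} depends on $\gamma$ causes no trouble, because the definition of $\overline{m}_k(\S)$ allows the optimizing metric to vary with the orbit. All the real work lies entirely in Theorem \ref{thm: shortest metric} itself.
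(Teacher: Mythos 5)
Your proposal is correct and matches the paper's (implicit) argument exactly: the upper bound on each orbit's infimum comes from evaluating the length function at the metric produced by Theorem \ref{thm: shortest metric}, and the lower bound is quoted from the construction in the first part of \cite[Cor.~1.3]{Basmajian}, precisely as the paper indicates in the paragraph preceding the corollary.
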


\subsection{Self-intersection and lengths of curves}
In \cite[Thm.~1.1]{Basmajian}, Basmajian shows that for any closed curve $\gamma$ of self-intersection $k$ on a compact hyperbolic surface $(\S, \rho)$ we have $c_\rho \cdot \sqrt{k} \leq \ell_\rho(\gamma)$, where $c_\rho$ is a constant depending on the metric $\rho$. The following provides a complementary upper bound, answering the second question asked in \cite{gaster2}:

\begin{theorem}\label{thm: shortest metric} Let $\gamma$ be a closed curve with $k$ self-intersections on $\S$. Then there exists a hyperbolic metric $\rho$ on $\S$ satisfying 
 \[
 \ell_\rho(\gamma) \leq C_3 \cdot \sqrt k~,
 \]
 where $C_3$ is a constant depending only on $\S$. Furthermore, the injectivity radius of $\rho$ is at least $1/(2\sqrt k)$.

\end{theorem}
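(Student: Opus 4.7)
The plan is to construct $\rho$ directly from the combinatorial structure of $\gamma$ itself. Let $\Sigma\subseteq\S$ be the subsurface filled by $\gamma$, so that after isotoping away null-homotopic bigons and monogons in its complement (which does not increase the self-intersection number), $\gamma$ defines a $4$-valent embedded graph $G\subset\Sigma$ with $V=k$ vertices and $E=2k$ edges. The complementary regions $\{F_i\}$ of $G$ in $\Sigma$ are then topological disks (polygons), with corners at the vertices of $G$.

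The key idea is to assign each edge of $G$ the length $\ell_0 := 1/\sqrt{k}$, so that $\ell_\rho(\gamma)=\sum_e\ell_\rho(e)=2k\cdot\ell_0=2\sqrt{k}$, and to realize these lengths by a hyperbolic metric on $\Sigma$ in which each face $F_i$ is a right-angled hyperbolic polygon: all angles $\pi/2$, so that the four corners at each $4$-valent vertex of $G$ sum to $2\pi$ and the metric is smooth across the vertices. To surmount the constraint that right-angled hyperbolic $n$-gons exist only for $n\geq 5$ sides and have limited shape parameters, I would further subdivide each face $F_i$ into right-angled hyperbolic hexagons by adding auxiliary arcs. A right-angled hyperbolic hexagon is determined by any three alternating side lengths, so the three ``$\gamma$-sides'' of each hexagon can be freely assigned length $\ell_0$ and the three auxiliary arcs are then determined by the standard hexagon formulas. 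Gluing these hexagons together yields a hyperbolic metric on $\Sigma$ with totally geodesic boundary $\partial\Sigma$.

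For the injectivity radius bound, we lower-bound the length of any essential closed loop in $(\Sigma,\rho)$. Such a loop must cross at least one side of some hexagon in the decomposition; a hyperbolic trigonometric estimate for right-angled hexagons with three short sides of length $\ell_0$ shows any such crossing contributes at least $\ell_0 = 1/\sqrt{k}$ to the loop's length, so the injectivity radius is at least $\ell_0/2 = 1/(2\sqrt{k})$. Finally, we extend $\rho$ across $\partial\Sigma$ to all of $\S$ by equipping $\S\setminus\Sigma$ with any hyperbolic metric having matching boundary cuff lengths and injectivity radius at least $1/(2\sqrt{k})$; this is possible because the boundary components of $\Sigma$ have length bounded by a constant multiple of $\sqrt{k}$, well within the admissible range of hyperbolic structures on the complementary subsurface.

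The main obstacle will be executing the hexagonal subdivision globally and consistently: one must ensure that the auxiliary arcs produced by adjacent hexagon formulas agree along shared edges, that all resulting lengths are positive, and that the entire complex assembles into a genuine hyperbolic surface. The trigonometric estimate establishing the injectivity radius bound is another delicate step requiring a careful analysis of right-angled hexagons in $\mathbb{H}^2$ with three prescribed short sides.
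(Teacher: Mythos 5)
Your approach is genuinely different from the paper's, but it contains a fatal gap: the metric you are trying to build is overdetermined and in general does not exist. First, the complementary faces of the $4$-valent graph $G$ can be triangles or quadrilaterals (minimal position only rules out monogons and bigons), and a geodesic hyperbolic $n$-gon with all interior angles $\pi/2$ has Gauss--Bonnet area $(n-2)\pi - n\pi/2 = (n/2-2)\pi \le 0$ for $n \le 4$, so such faces simply cannot be realized. Subdividing into right-angled hexagons does not rescue this: any subdivision of a face into right-angled pieces forces each corner of the face at a vertex of $G$ to carry angle at least $\pi/2$, and a geodesic triangle with three angles $\ge \pi/2$ has nonpositive area. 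Second, even for faces with $n \ge 5$ sides, a right-angled hyperbolic $n$-gon has only an $(n-3)$-dimensional moduli space --- the regular right-angled pentagon, for instance, exists for exactly one side length --- so you cannot freely assign length $1/\sqrt{k}$ to all $n$ sides. The hexagon subdivision merely relocates this overdetermination: each auxiliary arc shared by two hexagons receives two independent, generically incompatible length assignments from the two hexagon formulas. The ``consistency along shared edges'' that you defer as the main obstacle is thus not a technical step but a genuine obstruction; a dimension count makes this vivid, since hyperbolic structures on $\Sigma$ with geodesic boundary form a family of dimension $O(|\chi(\Sigma)|)$, independent of $k$, while you are imposing $2k$ edge-length conditions plus angle conditions at $k$ vertices. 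Relatedly, the injectivity radius claim is unsupported: the auxiliary arcs are outputs of the hexagon formulas, not inputs, so the hexagons may be arbitrarily thin and a crossing need not contribute $\ell_0$ to an essential loop.

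The paper avoids all of this by prescribing only the $O(|\chi(\S)|)$ Fenchel--Nielsen parameters of a pants decomposition $\p$ chosen (via \cite[Prop.~4.3]{Sapir}) so that $\gamma$ meets each cuff at most $D\sqrt{k}$ times: the cuff $\alpha$ is given length $\epsilon_\alpha = \min\{n_\alpha/\sqrt{k},1\}$, where $n_\alpha$ is the number of arcs of $\gamma$ crossing $\alpha$, and the length of a (non-geodesic) combinatorial representative of $\gamma$ built from $\tau$- and $\beta$-arcs is bounded using Sapir's inequalities relating the twisting numbers to $k$ (Lemmas \ref{self-intersection inequalities} and \ref{choosing twisting}). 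The lower bound on the systole then comes for free from $\epsilon_\alpha \ge 1/\sqrt{k}$ and the collar lemma. If you want to salvage your philosophy of building the metric from $\gamma$ itself, you would need to let the edge lengths and crossing angles be determined by the metric rather than prescribed, which is essentially why some device like a pants decomposition with controlled intersection is needed.
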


The proof relies on recent work of the fourth author \cite{Sapir}, in which a careful accounting of mapping class group orbits of curves is obtained. Briefly, we choose a pants decomposition (quoting \cite[Prop.~1.4]{Sapir}) so that $\gamma$ has controlled intersection with the pants cuffs. With such a pants decomposition in hand, we obtain a natural decomposition of the curve into arcs (see Section \ref{sec: short metric prelim}), and careful choices for lengths of pants cuffs are made so that the sum of the lengths of the constituent arcs can be controlled. The systole estimate arises naturally.

\begin{rem} \label{by hand} In many concrete cases, the method of proof of Theorem \ref{thm: shortest metric} produces a metric with an improved systole length. This allows the applications to degree (see Theorem \ref{thm: lifts simply in n}) to be strengthened. For further discussion, see Section \ref{sec:Questions and remarks} and Remark \ref{rem:Thick part condition}.
\end{rem}

\subsection{Bounds on $f_\rho$ and $f_\S$ for surfaces without cusps}
In \cite{Scott}, Scott explicitly constructs a regular, right-angled pentagonal tiling for any surface of negative Euler characteristic. We now fix the notation $\rho_0$ for the hyperbolic metric on $\S$ arising from this tiling. In \cite{Patel}, the third author used $\rho_0$ to obtain an upper bound on $f_\rho(L)$ that is linear in $L$ in the case where $(\S, \rho)$ does not have any cusps. In particular, for $\rho_0$ one has $f_{\rho_0}(L) < 16.2 \cdot L$ \cite[Thm~1.1]{Patel}. Gupta-Kapovich then gave a lower bound on $f_\rho(L)$ \cite{Gupta-Kapovich} that was recently improved upon by the second author. Combining the results of the second author \cite{gaster1} and third author \cite{Patel} gives the following description of $f_\rho(L)$ found in \cite[pp.~3]{gaster1}:

\begin{cor}[Gaster, Patel]\label{cor:linear characterization}
If $(\S, \rho)$ is a hyperbolic surface without cusps, there exists a constant $C = C(\S) > 0$ and $L_0 = L_0(\rho) > 0$ such that for $L \geq L_0$, 

\[ \left(\frac{1}{C \cdot \sys(\rho)}\right) L \leq f_\rho(L) \leq \left ( \frac{C}{\sys(\rho)}\right) L ~, \]

\noindent where $\sys(\rho)$ is the systole of $(\S, \rho)$. 

\end{cor}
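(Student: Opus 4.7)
The corollary is a direct combination of two prior results---the upper bound from \cite[Thm.~1.1]{Patel} and the lower bound from \cite[Thm.~1.4]{gaster1}---so my plan is primarily to cite and assemble these, verifying that the $\sys(\rho)$ dependence in each matches the form claimed.

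For the upper bound, I would invoke \cite[Thm.~1.1]{Patel}, which establishes $f_\rho(L) \leq (C_1(\S)/\sys(\rho)) \cdot L$ for any hyperbolic metric $\rho$ on a closed surface $\S$ without cusps. The third author's argument uses Scott's regular right-angled pentagonal tiling $\rho_0$ and resolves the self-intersections of $\gamma$ into simple lifts via an explicit finite cover built from the tiling, with the pentagonal structure yielding concrete degree estimates (the constant $16.2$ in the case $\rho = \rho_0$). When the argument is applied to an arbitrary $\rho$, the dependence on $\sys(\rho)$ appears when one compares lengths in $\rho$ to lengths in $\rho_0$ by a bi-Lipschitz map, whose optimal constant degenerates as $\sys(\rho) \to 0$.

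For the lower bound, I would invoke \cite[Thm.~1.4]{gaster1}. The second author constructs families of closed curves $\{\gamma_n\}_{n \geq n_0(\rho)}$ with $\ell_\rho(\gamma_n) \leq C_2 \cdot n \cdot \sys(\rho)$ and $\deg(\gamma_n) \geq C_3 \cdot n$, where $C_2, C_3$ depend only on the topology of $\S$. A typical construction concatenates $n$ copies of a systolic geodesic with appropriate modifications, so that the curves are long in $\rho$ but force large-degree covers by topological obstructions. Substituting $L = C_2 \cdot n \cdot \sys(\rho)$ yields $f_\rho(L) \geq (C_3/(C_2 \cdot \sys(\rho))) \cdot L$ for all $L \geq L_0(\rho) := C_2 \cdot n_0(\rho) \cdot \sys(\rho)$.

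Setting $C = C(\S)$ to be the maximum of the topological constants appearing in the two bounds finishes the argument. The main obstacle, such as it is, is purely bookkeeping: one must verify from the cited proofs that the scaling with $\sys(\rho)$ is exactly as claimed in each bound, and that all remaining metric dependence is absorbed into a single systole factor rather than into some stronger invariant of $\rho$. No genuinely new hyperbolic-geometric input is required beyond what is contained in \cite{Patel} and \cite{gaster1}.
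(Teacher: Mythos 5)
Your proposal is correct and is exactly what the paper does: Corollary \ref{cor:linear characterization} is stated without proof as a direct combination of the upper bound in \cite[Thm.~1.1]{Patel} (via the Lipschitz comparison to Scott's pentagonal metric $\rho_0$, with constant degenerating like $1/\sys(\rho)$) and the second author's lower bound from \cite{gaster1}, as quoted from p.~3 of that paper. One small citation quibble: \cite[Thm.~1.4]{gaster1} is the \emph{cusped}-case lower bound ($f_\rho(L)\gtrsim e^{L/2}$); the compact-case lower bound $f_\rho(L)\geq L/(C\cdot\sys(\rho))$ you need here is the other main theorem of \cite{gaster1}, realized by the curves $\alpha^n\beta$ with $\alpha$ a systole, just as you describe.
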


As noted by Gupta-Kapovich, a lower bound on $f_\rho(L)$ gives a lower bound on $f_{\S}(k)$ by appealing to a theorem of Basmajian \cite[Thm.~1.1]{Basmajian}. In \cite{gaster1}, the second author also improved the lower bound on $f_\S(k)$ obtained by Gupta-Kapovich without appealing to Basmajian's work relating self-intersection number and length of a closed curve, but rather by analyzing a specific family of curves (also analyzed by Basmajian \cite[Prop.~4.2]{Basmajian}). In particular, \cite[Thm.~1.2]{gaster1} shows that $f_\S(k)\ge k+1$.

Our goal is to produce a complementary upper bound on $f_\S(k)$ that is linear in $k$. Unfortunately, the upper bound on $f_\rho(L)$ from \cite{Patel} does not immediately yield an upper bound on $f_{\S}(k)$ as there exist arbitrarily long closed curves on a hyperbolic surface with few (or no) self-intersections. This issue is addressed by Theorem \ref{thm: shortest metric} above, in which a metric $\rho$ is produced that is well suited to a comparison between $\ell_\rho(\gamma)$ and the self-intersection $i(\gamma,\gamma)$. 

As in \cite[Lem.~7]{gaster1}, the change in length arising from the passage from $\rho$ back to $\rho_0$ can be controlled via the work of Lenzhen-Rafi-Tao \cite{L-R-T} on optimal Lipschitz constants between hyperbolic surfaces, with the caveat that control on the systole length of $\rho$ is required. This demonstrates the use of the systole estimate in the conclusion of Theorem \ref{thm: shortest metric}.

We note that an argument in the style of \cite{Hempel-residual} could result in a crude upper bound for $f_\S(k)$ that is exponential in $k$. Our theorem represents a vast improvement on this naive bound. 

\begin{theorem}\label{thm: lifts simply in n}
There is a constant $C_4$ depending only on $\S$ such that 
$f_\S(k) \leq C_4 \cdot k$.
\end{theorem}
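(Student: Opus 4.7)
The plan is to feed the metric produced by Theorem \ref{thm: shortest metric} into the already-established linear-in-length bound on $f_\rho$, the key point being that Theorem \ref{thm: shortest metric} controls both $\ell_\rho(\gamma)$ and $\sys(\rho)$ simultaneously.

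Given any closed curve $\gamma$ with $i(\gamma,\gamma) \leq k$, I would first invoke Theorem \ref{thm: shortest metric} to obtain a hyperbolic metric $\rho$ on $\S$ with $\ell_\rho(\gamma) \leq C_3\sqrt{k}$ and $\sys(\rho) \geq 1/(2\sqrt{k})$. Since $\sys(\rho)>0$, the metric $\rho$ is cusp-free, so Corollary \ref{cor:linear characterization} applies to $(\S,\rho)$. Chaining the bounds with $L = \ell_\rho(\gamma)$ yields
\[
\deg(\gamma) \leq f_\rho\bigl(\ell_\rho(\gamma)\bigr) \leq \frac{C}{\sys(\rho)} \cdot \ell_\rho(\gamma) \leq C \cdot 2\sqrt{k} \cdot C_3\sqrt{k} = 2 C C_3 \cdot k,
\]
where $C = C(\S)$ is the topology-only constant from Corollary \ref{cor:linear characterization}; one then takes $C_4 := 2 C C_3$.

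Unpacked, this chain of inequalities uses (i) Lenzhen-Rafi-Tao \cite{L-R-T}, as in \cite[Lem.~7]{gaster1}, to transfer from $\rho$ to Scott's metric $\rho_0$ via a Lipschitz estimate of the form $\ell_{\rho_0}(\gamma) \lesssim \ell_\rho(\gamma)/\sys(\rho)$, followed by (ii) Patel's uniform linear bound $f_{\rho_0}(L) < 16.2\,L$ from \cite[Thm.~1.1]{Patel}. Writing the argument out in this two-step form, rather than as a one-line citation of Corollary \ref{cor:linear characterization}, makes the role of the systole estimate explicit and matches the discussion preceding the statement.

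The main subtlety I anticipate is the threshold $L_0(\rho)$ appearing in Corollary \ref{cor:linear characterization}: since $\rho$ varies with $k$, it is not immediately clear that $\ell_\rho(\gamma) \geq L_0(\rho)$ uniformly in $k$. I would resolve this either by checking that Patel's bound is valid at all scales and that the Lipschitz constant extracted from \cite{L-R-T} depends only on $\sys(\rho)$ (so the $L_0$ threshold is inessential once the systole is controlled), or, failing that, by absorbing into $C_4$ the finitely many mapping-class-group orbits of curves with $i(\gamma,\gamma) \leq k_0$ for some fixed $k_0$ beyond which the argument runs cleanly.
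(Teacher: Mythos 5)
Your proposal is correct and is essentially the paper's proof: the paper likewise feeds Theorem \ref{thm: shortest metric} into the Lenzhen--Rafi--Tao Lipschitz estimate (as in \cite[Lem.~7]{gaster1}) and then Patel's bound from \cite[Thm.~1.1]{Patel}, which is exactly your ``unpacked'' two-step form and which sidesteps the $L_0(\rho)$ threshold you worried about, since Patel's bound is uniform in $L$ and the Lipschitz constant depends only on $\sys(\rho)$ and the topology. The one step you omit is the paper's opening remark that any punctures of $\S$ should first be replaced by boundary components (leaving $\deg(\gamma)$ unchanged) so that the cusp-free machinery applies; note that positivity of the systole alone does not rule out cusps, so your justification that $\rho$ is cusp-free needs this adjustment.
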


\begin{proof}
Suppose $\gamma$ is a closed curve on $\S$ with at most $k$ self-intersections. If necessary, replace any punctures of $\S$ with boundary components, and note that $\deg(\gamma)$ is unaffected by this change. By Theorem \ref{thm: shortest metric}, there is a $1/(2\sqrt{k})$-thick metric $\rho$ on $\S$ so that $\gamma$ has $\rho$-length at most $C_3 \cdot \sqrt{k}$. Translating \cite[Thm.~E]{L-R-T} as in \cite[Lem.~7]{gaster1}, there is a constant $C_0$ depending only on $\rho_0$ so that there is a Lipschitz map from $\rho$ to $\rho_0$ with Lipschitz constant $C_0/\sys(\rho)$. 

Thus we have 
\[ \ell_{\rho_0}(\gamma) \le \frac{C_0}{\sys(\rho)} \cdot \ell_\rho(\gamma) \le 2C_0\cdot C_3 \cdot k~,
\]
and \cite[Thm~1.1]{Patel} completes the proof.
\end{proof}

In fact, it is possible to achieve the conclusion of Theorem \ref{thm: lifts simply in n} without resorting to the kind of length control obtained in Theorem \ref{thm: shortest metric}, an argument communicated to us by Justin Malestein \cite{Malestein-private-communication}. This argument uses the finiteness of the number of mapping class group orbits of incompressibly embedded roses on $\S$, and finds a curve in the mapping class group orbit of $\gamma$ whose $\rho_0$-length can be controlled so that one may invoke \cite[Thm.~1.1]{Patel} directly. On the other hand, in addition to its independent interest, Theorem \ref{thm: shortest metric} includes fine control on the geometry of $\gamma$ on $\S$, and this control can be used to deduce more information about degrees of curves. 

For instance, our proof of Theorem \ref{thm: lifts simply in n} also yields control over the dependence of degree on the Euler characteristic of the underlying surface $\S$. Precisely, we show:

\begin{theorem} There exists a function $F: \mathbb{N}^{2} \rightarrow \mathbb{N}$ and a universal constant $R$ satisfying:
\begin{enumerate}
\item For any orientable surface $\S$ of finite type, $f_{\S}(k) \leq F(|\chi(\S)|, k)$;
\item For each $a \in \mathbb{N}$, $F(a, y)= O \left( y \right)$;
\item For each $b \in \mathbb{N}$, $F(x, b) =O \left( R^{x} \right)$.
\end{enumerate}
\end{theorem}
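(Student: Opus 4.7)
The strategy is to refine the proof of Theorem \ref{thm: lifts simply in n} by tracking the dependence of every constant on $|\chi(\S)|$. The target is a product-form bound $f_\S(k) \leq G(|\chi(\S)|) \cdot k$, with $G$ growing at most exponentially. Setting $F(x,y) := \lceil G(x) \cdot y \rceil$ then immediately yields all three conclusions: (i) holds by construction; (ii) for fixed $x = a$, $F(a, y) = G(a) \cdot y = O(y)$; (iii) for fixed $y = b$, $F(x, b) = G(x) \cdot b = O(G(x)) = O(R^x)$ for some universal $R$.

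The proof of Theorem \ref{thm: lifts simply in n} multiplies three factors, and each needs an $|\chi(\S)|$-audit. First, Theorem \ref{thm: shortest metric} provides $\ell_\rho(\gamma) \leq C_3(\S) \sqrt{k}$; one revisits Sapir's controlled pants decomposition \cite[Prop.~1.4]{Sapir} and the subsequent arc-length accounting to verify that $C_3(\S)$ grows at most polynomially in $|\chi(\S)|$, the key inputs being the number $O(|\chi(\S)|)$ of cuffs and a uniform bound on the arc-length contribution per pair of pants. Second, the Lenzhen-Rafi-Tao estimate \cite{L-R-T} of the form $\ell_{\rho_0}(\gamma) \leq (C_0(\rho_0)/\sys(\rho)) \cdot \ell_\rho(\gamma)$ carries a constant $C_0(\rho_0)$ depending on $\rho_0$; since the pentagonal metric $\rho_0$ is uniformly thick (its systole is bounded below by a universal constant), one bounds $C_0$ by comparing the thick-thin decompositions of $\rho$ and $\rho_0$ piece by piece, picking up a bounded multiplicative factor per pair of pants and so obtaining a bound at most exponential in $|\chi(\S)|$. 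Third, the constant $16.2$ in Patel's \cite[Thm.~1.1]{Patel} is universal.

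Multiplying these three factors yields $f_\S(k) = O(R^{|\chi(\S)|} \cdot k)$ for some universal $R$, and the choice of $F$ above completes the argument. The principal technical obstacle lies in extracting these explicit $|\chi|$-dependencies: the Sapir audit requires paying attention to how many arcs appear in the decomposition of $\gamma$ constructed in Section \ref{sec: short metric prelim}, while the Lenzhen-Rafi-Tao audit requires translating their asymptotic optimal-Lipschitz-constant statement into an effective piecewise estimate valid uniformly over the moduli space. Once both are carried out, no further topology-dependent constants remain in the chain of inequalities, and the claimed $F$ satisfies all three conditions.
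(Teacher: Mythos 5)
Your proposal aims for a strictly stronger statement than the theorem: a joint product bound $f_\S(k) \le G(|\chi(\S)|)\cdot k$ with $G(x) = O(R^x)$. The paper does not prove this, and the structure of its statement (three separate asymptotic conditions rather than $F(x,y) = O(R^x y)$) reflects that. The critical unjustified step in your argument is the second "audit": you assert that the Lenzhen--Rafi--Tao constant $C_0(\rho_0)$ in the Lipschitz bound $\ell_{\rho_0}(\gamma) \le (C_0/\sys(\rho))\,\ell_\rho(\gamma)$ can be bounded exponentially in $|\chi(\S)|$ by "comparing the thick-thin decompositions of $\rho$ and $\rho_0$ piece by piece, picking up a bounded multiplicative factor per pair of pants." Producing a Lipschitz map between two hyperbolic structures with controlled constant is precisely the hard content of \cite{L-R-T}, and there is no known way to extract an explicit $|\chi|$-dependence from their statement by such a naive piecewise comparison; the two metrics need not share compatible pants decompositions, and the constant in \cite[Thm.~E]{L-R-T} is not made effective in the topology. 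This is not a detail to be "carried out" but the obstruction that forces the paper onto a different route.

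The paper's actual argument sidesteps this as follows. For condition (2) it simply uses Theorem \ref{thm: lifts simply in n}, whose constant $C_4$ depends on $\S$ in an uncontrolled way but is linear in $k$. For condition (3) it replaces the L-R-T step by the Rafi--Tao bound \cite{RafiTao} on the diameter of the $1/(2\sqrt{k})$-thick part of moduli space in the Teichm\"uller metric, namely $R''\sqrt{k}\log|\chi(\S)|$, which \emph{is} explicit in $|\chi|$; since the Teichm\"uller metric dominates the Lipschitz metric, this yields $\ell_{\rho_0}(\gamma) \le (R')^{|\chi(\S)|}\sqrt{k}\cdot |\chi(\S)|^{R''\sqrt{k}} =: F_1$. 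Note $F_1$ is exponential in $\sqrt{k}$, hence far from linear in $k$ for fixed $\S$. The theorem is then obtained by setting $F := \min(16.2\cdot F_1,\; C_4\cdot k)$, so that (2) and (3) are each witnessed by one branch of the minimum. Your product-form $F$ would be a genuine improvement if you could substantiate the L-R-T audit, but as written that step fails. (A minor further point: you claim $C_3$ grows polynomially in $|\chi|$, whereas the paper only establishes exponential growth via the constant $D$ of Lemma \ref{Jenya's pants}; this does not affect the target but the polynomial claim is unsupported.)
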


\begin{proof}   The constant $C_3$ in the statement of Theorem \ref{thm: shortest metric} grows at most exponentially in the genus of the underlying surface $\S$; indeed, examining each of the terms in the inequality at the beginning of the proof of Theorem \ref{thm: shortest metric} carefully, we see that $C_3$ depends polynomially on the constant $C$ in the statement of Lemma \ref{self-intersection inequalities}, which in turn depends quadratically on the constant $D$ in the statement of Lemma \ref{Jenya's pants}. This last statement and the proof of Lemmas \ref{Jenya's pants} and \ref{self-intersection inequalities} in \cite{Sapir} demonstrate an exponential upper bound for the growth of $C_3$ as a function of $|\chi(S)|$. 

Therefore, for $\gamma$ a closed curve on $\S$ with $i(\gamma, \gamma) \leq k$, Theorem \ref{thm: shortest metric} yields a hyperbolic metric with injectivity radius at least $1/(2\sqrt{k})$, for which $\gamma$ has length $\leq (R')^{|\chi(S)|} \cdot \sqrt{k}$ for some universal constant $R'>0$. Then by \cite{RafiTao}, the diameter of the $1/(2\sqrt{k})$-thick part of the Moduli space (equipped with the Teichm{\"u}ller metric) is at most $R'' \cdot \sqrt{k} \cdot \log(|\chi(S)|)$, for some universal $R''$. As the Teichm{\"u}ller metric bounds the Lipschitz metric from above, it follows that $\ell_{\rho_{0}}(\gamma)$ is bounded above by 
\[ (R')^{|\chi(S)|} \sqrt{k} \cdot |\chi(S)|^{R'' \sqrt{k}}=: F_{1}(|\chi(\S)|, k). \]
Then choose $R \gg \max(R',R'')$, and again appealing to \cite[Thm~1.1]{Patel}, we have that the degree of $\gamma$ is at most $16.2 \cdot F_{1}$. We conclude by letting $F(|\chi(\S)|, k):= \min(16.2 \cdot F_{1}, C_4 \cdot k)$, where $C_4$ is the constant from Theorem~\ref{thm: lifts simply in n}. 

\end{proof}

\subsection{Bounds on $f_\rho$ for surfaces with cusps}
In our study of $f_\rho$ when $\rho$ has cusps, it will be useful to compare the length of a curve on the cusped surface to its length when the cusps have been replaced with geodesic boundary components. Note that $\rho_0$ below could be any hyperbolic metric on $\S$ without cusps, though in the applications that follow we are only concerned with Scott's pentagonally-tiled metric. 

\begin{theorem}\label{thm:difference in lengths}
Let $(\S, \rho)$ be a hyperbolic surface with at least one cusp and $\rho_0$ a metric on $\S$ without cusps. Then there exists an $L_0 > 0$ and a constant $C_0=C_0(\rho_0)$ depending on $\rho_0$, such for any $L \geq L_0$ and any closed curve $\gamma$ on $\S$ with $\ell_\rho(\gamma) \leq L$ we have $\ell_{\rho_0}(\gamma) \leq C_0 \cdot L \cdot e^{\frac{L}{2}}~.$ 
\end{theorem}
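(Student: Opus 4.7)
The plan is to dissect $\gamma$ along horocyclic cusp neighborhoods of $(\S,\rho)$, estimate the winding of each cusp excursion via a standard computation in the upper half-plane model, and then piece together a freely homotopic representative in $(\S,\rho_0)$ whose length can be bounded arc by arc. The exponential factor $e^{L/2}$ arises entirely from the cusp winding estimate.

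First I would fix embedded horocyclic neighborhoods $H_1,\dots,H_c$ of the cusps of $(\S,\rho)$ with pairwise disjoint boundary horocycles $\eta_i=\partial H_i$ of length $1$, and set $\S_0=\S\setminus\bigcup_i H_i$. Replace $\gamma$ by its $\rho$-geodesic representative; this decomposes cyclically into alternating \emph{cusp arcs} $\alpha_1,\dots,\alpha_m\subset\bigcup_i H_i$ and \emph{thick arcs} $\beta_1,\dots,\beta_m\subset\S_0$, with endpoints on $\bigcup_i\eta_i$. For the winding estimate, normalize the relevant cusp so that its stabilizer is $\langle z\mapsto z+1\rangle$ and its bounding horocycle lifts to $\{\mathrm{Im}\,z=1\}$. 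Each cusp arc $\alpha_j$ then lifts to a Euclidean semicircle with both endpoints on this horocycle. A direct hyperbolic arc-length computation gives that if $\ell_j=\ell_\rho(\alpha_j)$, the horizontal Euclidean span of the semicircle is $2\sinh(\ell_j/2)$, and hence the winding number $n_j$ of $\alpha_j$ around its cusp satisfies
\[
n_j \leq 2\sinh(\ell_j/2)+1 \leq e^{\ell_j/2}+1~.
\]

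Next I would build an explicit representative of $\gamma$ in $(\S,\rho_0)$. Fix a collar neighborhood $N$ of $\partial(\S,\rho_0)$ and a bilipschitz identification $\phi\colon\S_0\to\S\setminus N$ with bilipschitz constant $K_1=K_1(\rho,\rho_0)$. Replace each thick arc $\beta_j$ by $\phi(\beta_j)$, and each cusp arc $\alpha_j$ by an arc in the corresponding component of $N$ that crosses the collar once and wraps $n_j$ times around the boundary geodesic; this replacement has $\rho_0$-length at most $K_2+n_j\cdot b_{\max}$, where $b_{\max}=b_{\max}(\rho_0)$ is the longest boundary length and $K_2=K_2(\rho_0)$ bounds the collar width. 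After small adjustments at the $O(L)$ junction points to match endpoints, the concatenation is a closed curve freely homotopic to $\gamma$. Combining this with the bound $m\leq L/\epsilon_0$, where $\epsilon_0=\epsilon_0(\rho)>0$ is a lower bound on thick-arc length, and a convexity estimate for $\sum_j e^{\ell_j/2}$ subject to $\sum_j\ell_j\leq L$, one obtains
\[
\ell_{\rho_0}(\gamma) \leq K_1 L + b_{\max}\sum_{j=1}^m n_j + mK_2 \leq K_1 L + b_{\max}\bigl(e^{L/2}+O(L)\bigr) + O(L).
\]

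The main obstacle is that the bilipschitz constant $K_1$, the minimum thick-arc length $\epsilon_0$, and consequently the bound on $m$ all depend on $\rho$, whereas the theorem asserts $C_0=C_0(\rho_0)$. This $\rho$-dependence must be absorbed into $L_0$, which is allowed to depend on both metrics: once $L$ is large enough that the polynomial-in-$L$ terms with $\rho$-dependent coefficients are dominated by $\tfrac{1}{2}b_{\max}\cdot e^{L/2}$, the resulting bound has the form $C_0(\rho_0)\cdot e^{L/2}\leq C_0(\rho_0)\cdot L\cdot e^{L/2}$, as required.
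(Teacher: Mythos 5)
Your argument is correct in outline and takes a genuinely different route from the paper's. The paper first opens each cusp of $(\S,\rho)$ into a geodesic boundary component of length $e^{-L/2}$ (keeping the other Fenchel--Nielsen coordinates) to form a surface $\S'$, shows by a pointwise comparison of the metric tensors on the cusp neighborhood and the half-collar (using $\cosh^2 r< e^{2r}$, together with geometric convergence of the truncated fundamental domains) that $\gamma$ still admits a representative of length at most $L+\epsilon$ on $\S'$, and then applies the Lenzhen--Rafi--Tao bound on the Lipschitz constant from $\S'$ to $(\S,\rho_0)$, namely $C_0/\sys(\S')=C_0e^{L/2}$; multiplying gives $C_0\cdot L\cdot e^{L/2}$. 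You instead keep the target metric fixed and transport $\gamma$ arc by arc, paying the exponential price only on the cusp excursions via the winding estimate $n_j\le 2\sinh(\ell_j/2)+1$, which is correct: the excursion of length $\ell_j$ above the height-one horocycle is the top of a semicircle of Euclidean radius $\cosh(\ell_j/2)$ and hence has horizontal span $2\sinh(\ell_j/2)$. The routine points you elide do go through: the bilipschitz identification of the thick parts can be chosen compatible with the markings so that homotopy classes are preserved, and thick arcs have a uniform positive length lower bound (e.g.\ $2\log 2$, since the length-one horocycles sit at depth $\log 2$ inside the pairwise disjoint length-two horoball neighborhoods), so $m=O(L)$ as you claim. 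What each approach buys: the paper's is soft and reuses the Lipschitz-map technology already needed elsewhere, and the intermediate statement about $\S'$ is of independent interest; yours is self-contained and, via the convexity step $\sum_j e^{\ell_j/2}\le e^{L/2}+O(m)$, yields the stronger conclusion $\ell_{\rho_0}(\gamma)\le C_0\cdot e^{L/2}$ with no factor of $L$ --- the paper stretches the whole curve by $e^{L/2}$, whereas you stretch only the total cusp winding, which is itself at most $e^{L/2}+O(L)$. If the details are written out carefully, this would improve Theorem \ref{thm:cusped lift simply} to $f_\rho(L)=O(e^{L/2})$ and, combined with the lower bound in Corollary \ref{cor: exp characterization}, settle the question raised there of which bound is sharp; that is worth pursuing.
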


In \cite{gaster1}, the second author obtains an interesting lower bound on $f_\rho(L)$ in the case where $(\S, \rho)$ is a hyperbolic surface with at least one cusp, which in particular shows that the upper bound on $f_\rho(L)$ in Corollary \ref{cor:linear characterization} cannot hold in the cusped setting. Specifically, it is shown that when $(\S, \rho)$ is a hyperbolic surface with at least one cusp, $f_\rho(L)$ is roughly bounded from below by $e^{L/2}$. It is important to note $\deg(\gamma)$ is independent of the metric on $\S$. The discrepancy between the linear upper bound for $f_\rho(L)$ in the compact case and the exponential lower bound in the cusped case arises from the change in the length of a closed curve on a compact hyperbolic surface as we let the lengths of boundary curves go to zero. As an application of the length estimate in Theorem \ref{thm:difference in lengths} we obtain the following upper bound for $f_\rho(L)$ in the cusped case: 

\begin{theorem}\label{thm:cusped lift simply}
Let $(\S, \rho)$ be a hyperbolic surface with at least one cusp. Then there exists $L_0 > 0$ such that for any $L \geq L_0$ we have 
\[
f_\rho(L) \leq C_2 \cdot L \cdot e^{L/2}
\]
where $C_2 = 16.2 \cdot C_0$ and $C_0$ is a constant that depends only on $\S$. 
\end{theorem}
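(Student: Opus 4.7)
The strategy is essentially to compose two estimates. Since $\deg(\gamma)$ is a purely topological quantity, it is independent of the particular hyperbolic structure chosen on $\S$, so I am free to estimate it using whichever metric is most convenient. The cusped metric $\rho$ is good for the hypothesis (it gives me a length bound on $\gamma$) while Scott's pentagonally-tiled metric $\rho_0$ is good for the conclusion (via \cite[Thm~1.1]{Patel}, which asserts $\deg(\gamma) < 16.2 \cdot \ell_{\rho_0}(\gamma)$). The bridge between the two viewpoints is Theorem~\ref{thm:difference in lengths}.

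Concretely, the plan is as follows. Fix a closed curve $\gamma$ with $\ell_\rho(\gamma) \leq L$, where $L \geq L_0$ and $L_0$ is chosen so as to satisfy the hypothesis of Theorem~\ref{thm:difference in lengths}. As in the proof of Theorem~\ref{thm: lifts simply in n}, I may replace any punctures of $\S$ with boundary components without affecting $\deg(\gamma)$, so that $\rho_0$ can be taken to be Scott's pentagonally-tiled metric on the resulting bordered surface. Applying Theorem~\ref{thm:difference in lengths} yields
\[
\ell_{\rho_0}(\gamma) \;\leq\; C_0 \cdot L \cdot e^{L/2}~,
\]
where $C_0 = C_0(\rho_0)$ depends only on the topology of $\S$.

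Now \cite[Thm~1.1]{Patel} applied to the metric $\rho_0$ gives $\deg(\gamma) \leq 16.2 \cdot \ell_{\rho_0}(\gamma)$. Combining with the previous display yields
\[
\deg(\gamma) \;\leq\; 16.2 \cdot C_0 \cdot L \cdot e^{L/2}~.
\]
Taking the maximum over all $\gamma$ with $\ell_\rho(\gamma) \leq L$ and setting $C_2 := 16.2 \cdot C_0$ produces the desired bound on $f_\rho(L)$.

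There is no real obstacle here: once Theorem~\ref{thm:difference in lengths} is in hand, the theorem is a one-line corollary of it together with \cite[Thm~1.1]{Patel}. The only minor bookkeeping point is ensuring the correct interpretation of the curve $\gamma$ under the passage from cusps to boundary components, which is standard and does not affect $\deg(\gamma)$.
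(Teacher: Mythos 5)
Your proposal is correct and matches the paper's own argument essentially verbatim: apply Theorem \ref{thm:difference in lengths} to pass from $\ell_\rho(\gamma) \leq L$ to $\ell_{\rho_0}(\gamma) \leq C_0 \cdot L \cdot e^{L/2}$, then invoke \cite[Thm~1.1]{Patel} and take the maximum over such $\gamma$. No issues.
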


\begin{proof}
Given $\gamma$ on $(\S, \rho)$ with $\ell_\rho(\gamma) \leq L$, Theorem \ref{thm:difference in lengths} gives us that $\ell_{\rho_0}(\gamma) \leq C_0 \cdot L \cdot e^{L/2}$, where $C_0$ depends on the metric $\rho_0$. Since $\rho_0$ is fixed once and for all as Scott's pentagonally-tiled metric on $\S$, the constant $C_0$ depends only on the topology of $\S$. By Theorem 1.1 of \cite{Patel}, we have 
\[ \deg(\gamma) \leq 16.2 \cdot \ell_{\rho_0}(\gamma) \leq 16.2 \cdot C_0 \cdot L  \cdot e^{L/2}~,\] 
which completes the proof. 
\end{proof}

We prove Theorem \ref{thm:difference in lengths} in Section \ref{sec:cusped}. We introduce our combinatorial terminology in Section \ref{sec: short metric prelim} and prove some related technical estimates in Section \ref{sec: inequalities}. Theorem \ref{thm: shortest metric} is proved in Section \ref{sec:short geodesics}, after which we conclude with some remarks and questions.

\section{Proof of Theorem \ref{thm:difference in lengths}}\label{sec:cusped}

\begin{proof}[Proof of Theorem \ref{thm:difference in lengths}] In what follows we use the shorthand $\S$ and $\S_0$ for the hyperbolic surfaces $(\S, \rho)$ and $(\S, \rho_0)$, respectively. When these surfaces have non-compact, infinite volume ends, we replace such ends with totally geodesic boundary components. The result is unaffected by this change since the geodesic representative of a closed curve $\gamma$ on $\S$ never penetrates a non-compact end. 

Every cusp $c$ of $\S$ has an embedded neighborhood $N_{c}$ topologically equivalent to a cylinder, bounded by a horocycle $H_{c}$ of length $2$. Let $H_{0}$ be an embedded horocycle (parameterized by a parameter $t$ with respect to arclength) on the interior of $N_{c}$, whose length we denote by $l(H_0)$. Then $N_{c}$ can be parameterized by coordinates 

\begin{equation} \label{punc}
(\rho, t) \in (-\infty, d(0)] \times S^{1}~,
\end{equation}

\noindent where $d(0)$ is the distance between $H_{0}$ and $H_{c}$. With respect to this parameterization the hyperbolic metric can be expressed as 

\[ ds^{2}= d\rho^{2} + l^{2}(H_{0})e^{2\rho}dt^{2}~. \]

Concretely, if $x = (\rho, t) \in N_{c}$, then $\rho$ is the signed distance between $x$ and $H_{0}$ (where we define the distance between $x$ and $H_{0}$ to be negative if and only if $H_{0}$ does not separate $x$ from the cusp), and $t$ is the unique time so that the perpendicular from $x$ to $H_{0}$ intersects $H_{0}$ at time $t$.

Let $\eta$ be a totally geodesic boundary component of a hyperbolic surface. By the collar lemma, there exists an embedded half-collar neighborhood $N_{\eta}$ topologically equivalent to a cylinder, and metrically equivalent to a cylinder coordinatized by

\begin{equation} \label{comp}
(\rho, t) \in \left[0,  \sinh^{-1}\left(\frac{1}{\sinh\left(\frac{l(\eta)}{2} \right) } \right) \right] \times S^{1} ~, 
\end{equation}

\noindent with metric tensor
\[ ds^{2}= d\rho^{2} + l^{2}(\eta)\cosh^{2}(\rho)dt^{2}~. \]

Now, let $\gamma$ have length $\leq L$ on $\S$. Let $\S'$ denote the surface obtained from $\S$ by replacing each cusp with a totally geodesic boundary component of length $e^{-L/2}$. In other words, after specifying $\S= \left\{ (l_{1}, \theta_{1}),..., (l_{i}, \theta_{i}) \right\}$ in Fenchel-Nielsen coordinates with respect to some choice of pants decomposition, $l_{j}= 0$ for at least one $j$ (by assumption that $\S$ has at least one cusp). Then $\S'$ is the surface with the same Fenchel-Nielsen parameters after replacing each length $0$ coordinate with a length coordinate equal to $e^{-L/2}$.

 We will show that given any $\epsilon>0$, there exists $L$ sufficiently large so that $\gamma$ admits a representative of length $\leq L+\epsilon$ on $\S'$. Assuming this, we complete the argument as follows: for sufficiently large $L$, $e^{-L/2}$ will be the length of the shortest homotopically non-trivial curve on $\S'$, and therefore there is a Lipschitz map with Lipschitz constant bounded by $C_0 \cdot e^{L/2}$, between $\S'$ and the surface $\S_0$, where $C_0$ is a topological constant depending on $\S$ (see \cite{L-R-T} and \cite[Lem.~7]{gaster1} for details and for a description of the constant $C_0$). It then follows that the length of $\gamma$ is at most $C_0 \cdot (L+\epsilon) \cdot e^{L/2}$ on $\S_0$, and replacing $C_0$ by a slightly larger constant $C_1$, we have that the length of $\gamma$ is at most $C_1 \cdot L \cdot e^{L/2}$ on $\S_0$.

Thus, we have reduced the problem to showing that for $L$ sufficiently large, the length of $\gamma$ on $\S'$ is at most $L+\epsilon$. To avoid confusion, henceforth we will refer to the geodesic representative of $\gamma$ on $\S'$ as $\gamma'$. Note that on $\S$, $\gamma$ can penetrate at most $L/2$ into $N_{c}$ for any cusp $c$ of $\S$. For $a>1$, the horocycle of length $2/a$ is located at a signed distance of $\log(a)$ from $H_{c}$. Thus the horocycle of length $e^{-L/2}$, which we denote by $H_{L}$, is at a distance of $\log(2)+ L/2$ from $H_{c}$, and thus it separates $\gamma$ from the cusp $c$. We make a direct comparison between the metric on $N_{c}$ with respect to the horocycle $H_{L}$, and the metric on $\S'$ when viewed from a boundary component $\eta$ of length $e^{-L/2}$: since $\cosh^{2}(r) < e^{2r}$ for any $r>0$, it follows that the metric tensor on the half-collar neighborhood $N_{\eta}$ is bounded above by the metric tensor on the subset of $N_{c}$ bounded between $H_{c}$ and $H_{L}$. 

Let $W_{c} \subset \mathbb{H}^{2}$ denote a fundamental domain for $\S \setminus \bigcup_{c} N_{c}$, where the union is taken over all cusps $c$ of $\S$; $W_{c}$ is a compact region of $\mathbb{H}^{2}$ bounded by finitely many geodesic segments and finitely many segments of horocycles. Let $W_{\eta}$ denote a fundamental region for $\S' \setminus \bigcup_{\eta} N_{\eta}$, where the union is taken over all boundary components of $\S'$ coming from cusps of $\S$. Then as $L \rightarrow \infty$, $W_{\eta}$ converges in the Gromov-Hausdorff sense to $W_{c}$. Thus, for $L$ sufficiently large, the metric on the subsurface of $\S$ corresponding to the quotient of $W_{c}$ by $\pi_{1}(\S)$, is arbitrarily close to the metric on the subsurface of $\S'$ corresponding to the quotient of $W_{\eta}$ by $\pi_{1}(\S')$. 

Now, $\gamma$ decomposes as a union of geodesic arcs with endpoints on $H_{c}$, and similarly $\gamma'$ can be written as a concatenation of geodesic arcs with endpoints on $\partial N_{\eta}$. The interior of each arc of $\gamma$ (resp. $\gamma'$) lies completely within $N_{c}$ (resp. $N_{\eta}$) or completely within $W_{c}/\pi_{1}(\S)$ (resp. $W_{\eta}/\pi_{1}(\S')$). Thus we can compare the length of $\gamma$ to that of $\gamma'$ arc-wise: corresponding arcs in the complement of $N_{c}$ and $N_{\eta}$ are uniformly comparable within $\epsilon'$ (chosen smaller than $\epsilon/n$, where $n$ is the total number of arcs) by geometric convergence of $W_{\eta}$ to $W_{c}$, and arcs of $\gamma'$ within $N_{\eta}$ have length bounded above by the corresponding arc in $N_{c}$ since the metric tensor on $N_{\eta}$ is bounded above by the metric on $N_{c}$.  By this we mean that if $p, p'$ are two paths, one in $N_{c}$ and the other in $N_{\eta}$ respectively, so that $p$ and $p'$ have the same parameterization in coordinates, the length of $p'$ is at most that of $p$.  

Expounding on this last point,  geometric convergence of $W_{\eta}$ to $W_{c}$ implies that by choosing $L$ sufficiently large, the coordinates (in the circle $S^{1}$) of the endpoints of any arc in $\S$ on $H_{c}$ can be chosen to be arbitrarily close to the coordinates of the endpoints of the corresponding arc in $\S'$. Therefore, for any arc $\lambda$ in $N_{\eta}$, we can choose a parameterization in the coordinates of $(\ref{comp})$ that is arbitrarily close to the parameterization (in the coordinates of $(\ref{punc})$) of the corresponding arc in $N_{c}$; note that this parametrization may not yield the geodesic representative of $\lambda$ on $N_\eta$, and note also that we are using the fact that 
\[ \sinh^{-1}\left(\frac{1}{\sinh\left( \exp(-L/2) \right) } \right)  >L/2~. \]
Since the coordinates of $\lambda$ agrees with those of its counterpart on $\S$ (up to arbitrarily small precision), and the metric tensor on $N_{c}$ is an upper bound for the metric on $N_{\eta}$, the length of $\lambda$ is bounded above by the length of its counterpart on $\S$.

\end{proof}

\section{Combinatorial models for closed curves}
\label{sec: short metric prelim}
We begin with the combinatorial decomposition of a curve necessary for the proof of Theorem \ref{thm: shortest metric}. For convenience we will assume that $\S$ is compact in this construction; after proving Theorem \ref{thm: shortest metric} in that setting, we will deduce it in the case that $\S$ has cusps or non-compact ends. In everything that follows, the curve $\gamma$ on $\S$ of self-intersection $k$ is fixed. The first step is to choose a pants decomposition on $\S$ that is `well-balanced' with respect to $\gamma$, a construction contained in \cite{Sapir}.

\begin{lem}{\cite[Proposition 4.3]{Sapir}}
\label{Jenya's pants}
{There is a constant $D=D(\S)$ and a pants decomposition $\p=\p(\gamma)$ of $\S$ so that $\gamma$ intersects each cuff at most $D\sqrt{k}$ times.}
\end{lem}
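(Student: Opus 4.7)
My plan is to construct the pants decomposition inductively, adding one cuff at a time. Suppose pairwise disjoint, essential, pairwise non-parallel simple closed curves $c_1, \ldots, c_j$ on $\S$ have been chosen, each intersecting $\gamma$ at most $D\sqrt{k}$ times, and let $\S_j$ denote $\S$ cut along them. If every component of $\S_j$ is a pair of pants we are done; otherwise some component $F$ has $\chi(F) < 0$ and is not a pair of pants, and the inductive step is to locate an essential simple closed curve $\alpha \subset F$, non-peripheral in $F$, with $i(\alpha,\gamma) \le D\sqrt{k}$. After at most $3g - 3 + n$ iterations this produces the desired pants decomposition $\p$. The intersection bound $i(c_i,\gamma) \le D\sqrt{k}$ is a topological invariant of $c_i$ on $\S$, so it is preserved throughout the induction.

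The inductive step is packaged as a sub-lemma: for every essential subsurface $F \subseteq \S$ with $\chi(F) < 0$ that is not a pair of pants, there exists an essential simple closed curve $\alpha \subset F$, non-peripheral in $F$, with $i(\alpha,\gamma) \le D\sqrt{k}$. I would prove this by first placing $\gamma$ in minimal position, so that $\gamma \cap F$ consists of arcs $a_1, \ldots, a_N$ with endpoints on $\partial F$ (plus possible closed components). These arcs fall into finitely many isotopy classes rel $\partial F$, with the number of distinct classes bounded by a constant $M = M(|\chi(\S)|)$. The combinatorial heart of the argument is to show that $N = O(\sqrt{k})$. The intuition is that many arcs of $\gamma \cap F$ concentrated in few isotopy classes force, via the way $\gamma$ connects them through $\S \setminus F$, a quadratic number of self-intersections: locally around $\partial F$ the connecting arcs can be modeled by a braid, and $m$ strands with the cyclic connection pattern inherited from the global curve $\gamma$ will produce $\Omega(m^2)$ crossings unless the configuration is already very constrained. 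Once $N = O(\sqrt{k})$ is established, a pigeonhole argument locates $\alpha$: the boundary components of $F$ collectively carry $2N$ arc endpoints, so some boundary component is crossed at most $O(\sqrt{k})$ times. If that component is non-peripheral in $F$ we take it as $\alpha$; otherwise an averaging over isotopy classes produces an essential curve in the interior of $F$ disjoint from all but $O(\sqrt{k})$ arcs, which can be isotoped to bound $\alpha$.

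The main obstacle is the quantitative combinatorial claim that $N = O(\sqrt{k})$, specifically the lower bound on self-intersections forced by parallel arcs of $\gamma \cap F$ connected into a single curve. Making this precise requires careful analysis of the cyclic pattern by which $\gamma$ traverses isotopy classes of arcs, together with a braiding-type lower bound on the number of forced crossings that the complementary arcs in $\S \setminus F$ produce between copies. Train track techniques—carrying $\gamma \cap F$ on a train track $\tau$ in $F$ and organizing isotopy classes via weights on $\tau$—likely provide the cleanest framework for this accounting. Uniformity of the constant $D = D(\S)$ across all subsurfaces $F$ arising in the induction is then automatic, because every such $F$ is an essential subsurface of $\S$ and hence has complexity bounded by $|\chi(\S)|$, which controls $M$ and the other implicit constants uniformly.
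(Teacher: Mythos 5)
The paper does not actually prove this lemma: it is imported verbatim from \cite[Prop.~4.3]{Sapir} (cf.\ the reference to Prop.~1.4 of that paper in the introduction), so there is no in-paper argument to compare against. Judged on its own, your inductive framework --- adding one cuff at a time, each meeting $\gamma$ at most $D\sqrt{k}$ times, until the complement is a union of pairs of pants --- is a reasonable skeleton, but the sketch has a genuine gap exactly at the point where the self-intersection number has to enter.

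First, you have misplaced the difficulty. For a proper subsurface $F$ arising in your induction, every endpoint of an arc of $\gamma\cap F$ lies on a previously chosen cuff, each of which meets $\gamma$ at most $D\sqrt{k}$ times; since $F$ has at most $O(|\chi(\S)|)$ boundary curves, the bound $N=O(\sqrt{k})$ on the number of arcs is immediate from the inductive hypothesis and needs no braiding or train-track argument. (Separately, your claim that the arcs fall into at most $M(|\chi(\S)|)$ isotopy classes rel $\partial F$ is false: arcs of $\gamma\cap F$ may pairwise intersect many times and hence occupy arbitrarily many isotopy classes; only collections with uniformly bounded pairwise intersection have boundedly many classes.) The real content of the lemma is the case your sketch sets aside in a parenthesis: closed components of $\gamma\cap F$, and above all the base case $F=\S$, where there are no arcs at all and your pigeonhole over arc endpoints on $\partial F$ produces nothing --- note also that every component of $\partial F$ is by definition peripheral in $F$, so the first branch of your final case analysis is vacuous. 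What must be shown there is that a closed curve with $k$ self-intersections admits an essential, non-peripheral simple closed curve crossing it $O(\sqrt{k})$ times; this is precisely the $\sqrt{k}$ phenomenon the lemma asserts, and your proposal defers it to an unproven ``braiding-type lower bound.'' A workable route, close in spirit to \cite{Sapir}, regards $\gamma$ in minimal position as a $4$-valent graph with $k$ vertices and $2k$ edges filling a subsurface $F'$: an Euler characteristic count bounds the number of complementary disks by $k+|\chi(F')|$, a combinatorial systolic inequality then yields an essential curve crossing $\gamma$ at most $O(\sqrt{k})$ times, and the boundary of a regular neighborhood of $\gamma$ supplies curves disjoint from $\gamma$ when $\gamma$ does not fill. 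Without an argument of this kind your induction cannot get started.
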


The pants decomposition $\p$ is fixed in everything that follows.

\subsection{Realizing $\gamma$ as a cycle in a graph on $\S$} We fix a graph in each pair of pants so that each pants cuff is homotopic to a cycle of length two, and so that the complement of the graph consists of two hexagons, as shown in Figure \ref{fig:pantshexagons}. 
We will call the edges of this graph that connect distinct pants cuffs \emph{seam edges}, and the other edges \emph{boundary edges}. We glue together pants according to the combinatorics of the pants decomposition obtained in Lemma \ref{Jenya's pants}, gluing boundary edges to boundary edges and seam endpoints to seam endpoints. This way we obtain an embedded graph $G$ on $\S$. Note that its vertices have valence four on the interior of $\S$ and valence three on the boundary.

\begin{figure}[h!]
 \centering 
 \includegraphics{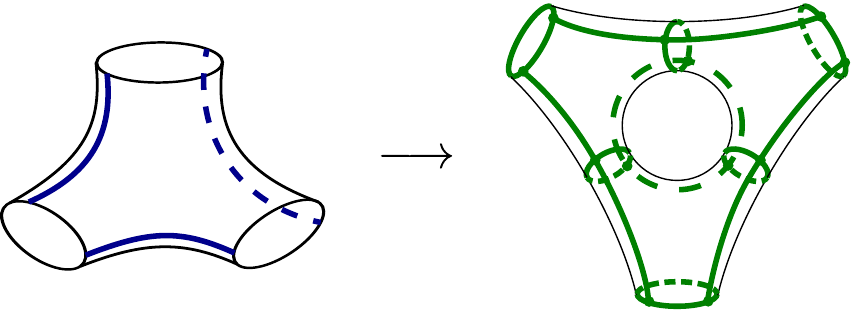}
 \caption{Pairs of pants cut into two hexagons along seam edges, as on the left, are glued together into the graph $G$ on $\S$, on the right.}
 \label{fig:pantshexagons}
\end{figure}

The pants decomposition $\p$ from Lemma \ref{Jenya's pants} makes the construction of hyperbolic structures on $\S$ possible `by hand':  A hyperbolic structure on a pair of pants with geodesic boundary is determined up to isometry by the lengths of the cuff curves, and one may glue the pants along their geodesic boundaries by matching seams to seams appropriately according to the combinatorics of $\p$. The conclusion of Theorem \ref{thm: shortest metric} is achieved by describing careful choices of length coordinates for the pants decomposition obtained in Lemma \ref{Jenya's pants}, and by making judicious choices of twisting $\gamma$ around cuffs to control its length. We note that with respect to natural choices for the Fenchel-Nielsen cordinates of the Teichm\"{u}ller space of $\S$, any metric we construct will have `twist coordinates' equal to half-integers, which we refer to below as a \emph{convenient} metric on $\S$.

Every closed curve on $\S$ is homotopic to a cycle of $G$. There is natural ambiguity in this description of the curve as a cycle, which we eliminate as follows. Choose a convenient hyperbolic structure on $\S$, and realize the graph $G$ so that each of the seam edges is the simple geodesic arc orthogonal to a pair of pants cuffs, and so that each boundary edge is half of the geodesic representative of a pants cuff. 

For any closed curve $\eta$, realize it as a closed geodesic. Following the construction in \cite[Lemma 2.2]{Sapir}, we obtain a cycle $c(\eta)$ freely homotopic to $\eta$ (Figure \ref{fig:SurfaceProjection}). Loosely speaking, the cycle $c(\eta)$ is formed by pushing $\eta$ onto the graph $G$ so that it has the fewest visits to pants curves among all cycles in $G$ homotopic to $\eta$. (Note that in \cite{Sapir}, $c(\eta)$ is refered to as a word $w_\Pi(\eta)$ whose letters are edges in $G$.) Though the cycle $c(\eta)$ constructed in \cite[Lemma 2.2]{Sapir} may depend on the chosen hyperbolic structure on $\S$, it is well-defined as a cycle of $G$ embedded on $\S$, so we may now alter the hyperbolic structure at will.

\begin{figure}[h!]
 \centering 
 \includegraphics{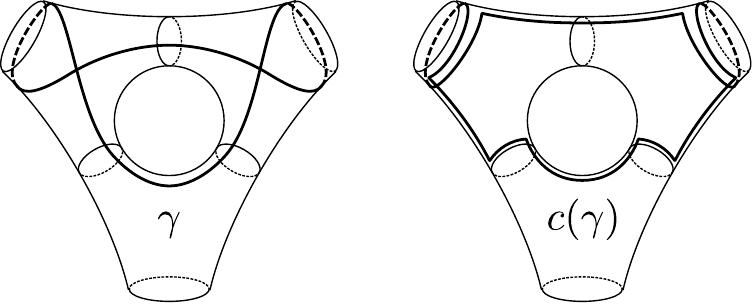}
 \caption{The curve $\gamma$ and the associated cycle $c(\gamma)$}
 \label{fig:SurfaceProjection}
\end{figure}

In what follows, we will construct a hyperbolic metric on $\S$ and bound the length of the corresponding geodesic $\gamma$ by bounding a chosen representative: By cutting $c(\gamma)$ into pieces and using the combinatorics of the result to provide instructions for the construction of a curve homotopic to $c(\gamma)$, we will bound from above the length of $\gamma$ in terms of its self-intersection number.

\begin{figure}[h!]
 \centering 
 \includegraphics{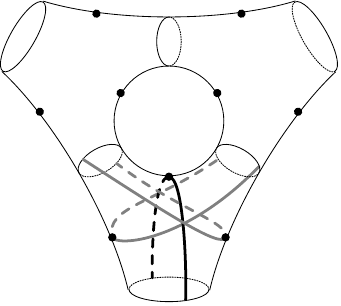}
 \caption{We construct a seam point on each seam edge.}
 \label{fig:SeamPoints}
\end{figure}

\subsection{Definition of $\tau$- and $\beta$- arcs and twisting numbers} 
When a convenient hyperbolic structure on $\S$ is chosen, the cycle $c(\gamma)$ can be broken into more manageable pieces. In each pair of pants, there are three simple geodesics that connect a boundary component to itself. Each of these three geodesics intersects a unique seam edge orthogonally, and we refer below to this intersection point as a \emph{seam point} (Figure \ref{fig:SeamPoints}). The seam points cut the cycle $c(\gamma)$ into arcs. When such an arc crosses a pants cuff, we call it a $\beta$-arc; otherwise it will be called a $\tau$-arc.  Note that $c(\gamma)$ is given by the union of the $\tau$- and $\beta$- arcs. See Figure \ref{fig:TauAndBeta} for an illustration.

\begin{figure}[h!]
 \centering 
 \includegraphics{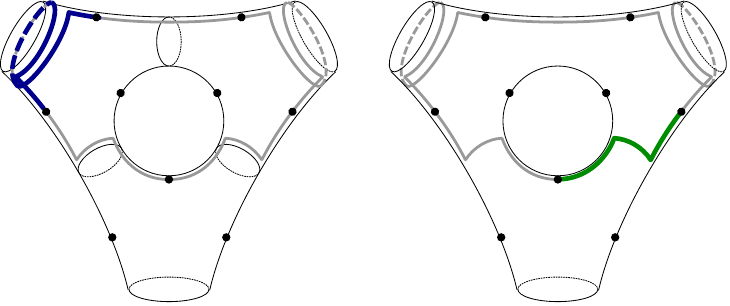}
 \caption{A $\tau$-arc is on the left and a $\beta$-arc is on the right.}
 \label{fig:TauAndBeta}
\end{figure}

In this description, each $\tau$- and $\beta$- arc acquires a twisting number. The twisting number $t_i$ of $\tau_i$ is a positive integer, and the twisting number $b_i$ of $\beta_i$ can be any integer. 
The twisting number is defined as follows: each $\tau$- and $\beta$- arc has a subarc $\eta$, contained in a pants cuff, given by the concatenation of some number of edges of the graph $G$. If $\eta$ is part of a $\tau$-arc $\tau_i$, then $t_i$ is the number of edges in $\eta$. For instance, the twisting number of the $\tau$-arc in Figure \ref{fig:TauAndBeta} is three. 

If $\eta$ is part of a $\beta$-arc $\beta_i$, then the number of edges in $\eta$ is the absolute value of $b_i$. The sign of $b_i$ is determined by a choice of orientation of $\S$; the sign of $b_i$ is given by the sense with which $\beta_i$ winds around the corresponding pants cuff. For example, the twisting number of the $\beta$-arc in  Figure \ref{fig:TauAndBeta} is either +1 or -1, depending on the orientation of $\S$.

We note that it is possible to determine precisely the length of a $\tau$- or $\beta$- arc from the length of the pants curve and the twisting number, but coarser estimates suffice in the proof of Theorem \ref{thm: shortest metric} (see Lemma \ref{length estimate lemma}). This will allow us to estimate the length of $c(\gamma)$ in terms of the twisting numbers of the $\tau$- and $\beta$- arcs. In order to relate the length of $\gamma$ to its self-intersection and the lengths of the pants curves, we will require a method of estimating twist numbers as a function of self-intersection.

\section{Inequalities relating self-intersection and twisting numbers} 
\label{sec: inequalities}

In \cite{Sapir}, a sequence of Dehn twists about curves in $\p$ are applied to $\S$ so that the twisting numbers associated to $c(\gamma)$ about each cuff $\alpha$ in $\p$ are related to $k$. Fix a cuff $\alpha$ of $\p$. Let the $\beta$-arcs of $\alpha$ be given by $\beta_\alpha =\{\beta_1,\ldots,\beta_n\}$ and let the $\tau$-arcs be given by $\tau_\alpha=\{\tau_1,\ldots,\tau_m\}$. The twisting coordinates are given by $b_1,\ldots,b_n\in \Z$ and $t_1,\ldots,t_m \in \Z_{>0}$, and by relabelling if necessary, we assume that $b_1\le b_2 \le \ldots \le b_n$ and $t_1\ge t_2 \ge \ldots \ge t_m$. Note that Lemma \ref{Jenya's pants} implies that $n\le D\sqrt{k}$. The following inequalities are obtained in \cite{Sapir}:

\begin{lem}\label{self-intersection inequalities}

There is a mapping class $\phi$, which is an appropriate product of powers of Dehn twists about the pants curves, and a constant $C$ so that the $\tau$- and $\beta$- arcs of $c(\phi \cdot \gamma)$ about each cuff $\alpha$ satisfy the following relationships:

\begin{itemize}
 \item The total number of $\tau$- and $\beta$- arcs is bounded by a multiple of $\sqrt k$: 
 \[
  n + m \leq C \cdot  \sqrt k
 \]
 
 \item When $n\ne 0$ the sum of the $b_i$'s satisfies:
 
 \[ 0 < \sum b_i \leq C \cdot \sqrt k 
 \]
 
 \item If we only consider those $\tau$-arcs with twisting number at least 4, we get
 \[
  \sum_{t_i \geq 4} i t_i \leq C \cdot k
 \]
 
 \item Regarding twisting numbers of $\beta$-arcs we have:
  \[
  \sum_{i > j}  b_i - b_j \leq C \cdot k
 \]

 \item Lastly, we have:
 \[
  n \sum t_i \leq C \cdot k
 \]
 
\end{itemize}

\noindent Note that the constant $C$ depends only on the topology of $\S$. 

\end{lem}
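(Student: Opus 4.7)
The plan is to choose $\phi$ as a product of powers of Dehn twists about the cuffs of $\p$, with twist exponents selected to normalize the twisting coordinates, and then to deduce each inequality by counting self-intersections of $c(\phi \cdot \gamma)$ inside the annular neighborhoods of the cuffs. The key input throughout is Lemma \ref{Jenya's pants}, which controls the crossings of $\gamma$ with each cuff.

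For the first inequality, the number $n$ of $\beta$-arcs at a fixed cuff $\alpha$ equals $i(\gamma,\alpha) \leq D\sqrt{k}$ by Lemma \ref{Jenya's pants}, since the $\beta$-arcs are precisely the pieces of $c(\gamma)$ that cross $\alpha$. Each $\tau$-arc at $\alpha$ begins and ends at one of the finitely many seam points on $\alpha$ (whose count depends only on the topology of $\S$), so a combinatorial count on the graph $G$ bounds $m$ by a topological multiple of $n$, giving $n + m \leq C\sqrt{k}$. For the second inequality, $T_\alpha^q$ shifts each $b_i$ by $q$ with a consistent sign, so $\sum b_i$ shifts by $qn$; choosing the appropriate $q$ at $\alpha$ (and doing so simultaneously at all cuffs to define $\phi$) places $\sum b_i$ in $(0,n]$, yielding $0 < \sum b_i \leq n \leq D\sqrt{k}$.

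The three remaining inequalities reflect self-intersections of $c(\phi \cdot \gamma)$ forced by the arc combinatorics inside the annulus about $\alpha$. Two $\tau$-arcs with twisting numbers $t_i > t_j$, both wrapping (which is ensured by the threshold $t_i \geq 4$), wind around each other and contribute on the order of $\min(t_i,t_j)$ intersections; summing over sorted pairs and absorbing the shift from $(i-1)$ to $i$ into $C$ produces $\sum_{t_i \geq 4} i t_i \leq Ck$. Two $\beta$-arcs with twists $b_i > b_j$ cross each other approximately $b_i - b_j$ times as they wind past one another in the annulus, giving inequality (4). Finally, each $\beta$-arc crosses a $\tau$-arc of twist $t_j$ roughly $t_j$ times as the $\tau$-arc passes through the $\beta$-arc's transit, so summing across the $n$ choices of $\beta$-arc and $m$ choices of $\tau$-arc produces $n \sum t_j \leq Ck$. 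In each case, the total intersection count is bounded by $i(\gamma,\gamma) \leq k$ up to a multiplicative topological constant.

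The main obstacle is making these intersection estimates rigorous when arcs wrap in opposite senses around $\alpha$ or when $\tau$-arcs lie on different sides of $\alpha$: the counts then depend delicately on orientation data encoded in the signs of the $b_i$, and pairs that wind in opposite directions can contribute additional intersections beyond the naive count. A secondary difficulty is verifying that the normalizing Dehn twists chosen at distinct cuffs are compatible, i.e., that twisting about one cuff does not destroy the bounds already established at an adjacent cuff; this is handled by tracking how each $\tau$- and $\beta$-arc transforms under twists about cuffs other than its own, and confirming that the contributing quantities $n$, $\sum b_i$, and $\sum t_i$ transform in a controlled way.
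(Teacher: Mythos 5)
The paper does not actually prove this lemma from scratch: it imports it wholesale from \cite{Sapir} (Proposition 3.1 and its proof, specifically Lemma 3.6, Claims 4.7 and 4.9, Remarks 5.9 and 5.11, and inequality (4.3.3) there), and only sketches the second bullet, noting that a Dehn twist about $\alpha$ shifts $\sum b_i$ by $2n$ so that an appropriate power places the sum in $(0,2n]\subseteq(0,2D\sqrt k\,]$. Your heuristics are the right ones and match the spirit of the source --- each of the last three bullets is indeed an annular intersection count --- but what you have written is a sketch whose hard content is named in your final paragraph and then left unresolved. Showing that two $\tau$-arcs with $t_i>t_j\ge 4$ are \emph{forced} to contribute on the order of $t_j$ \emph{essential} intersections (intersections of the geodesic $\gamma$, not merely crossings of the non-taut combinatorial representative $c(\gamma)$, which can a priori be homotoped away), handling arcs that wind in opposite senses or lie on opposite sides of $\alpha$, and verifying that the normalizing twists at distinct cuffs do not interfere with one another are precisely the substance of the cited claims in \cite{Sapir}; without them the three quadratic inequalities are unproven. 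The paper itself flags this subtlety (``complications arise due to the ambiguity inherent to the cycle $c(\gamma)$'').

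Two smaller points. First, your bound on $m$ is wrong as stated: the number of $\tau$-arcs at $\alpha$ is not controlled by a topological multiple of $n=i(\gamma,\alpha)$ (a curve can wind around $\alpha$ many times without crossing it, so $n$ can be $0$ while $m$ is large). The correct count bounds $m$ by the number of strands of $\gamma$ entering the pairs of pants adjacent to $\alpha$, i.e.\ by the sum of intersection numbers with \emph{all} cuffs of those pants, each of which is at most $D\sqrt k$ by Lemma \ref{Jenya's pants}. Second, in the combinatorial model used here the cuff is a cycle of length two in $G$, so a single twist changes each $b_i$ by $2$, not $1$; this only affects the constant.
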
 
 The second bullet is a consequence of the fact that a Dehn twist about $\alpha$ changes $\sum b_i$ by a factor of $2n$ and Lemma~\ref{Jenya's pants} tells us that $n \leq D \sqrt{k}$.

The last three bullet points of Lemma \ref{self-intersection inequalities} can be viewed as the contribution to the self-intersection number of $\gamma$ from the ``intersection" between different $\tau$-arcs, different $\beta$-arcs, and between $\tau$- and $\beta$- arcs, respectively, though complications arise due to the ambiguity inherent to the cycle $c(\gamma)$. We note that the fixed pants decomposition $\mathcal{P}$ is well suited for the curve $\phi \cdot \gamma$ in the sense that the estimate in Lemma \ref{Jenya's pants} still holds. 

The relevant facts from \cite{Sapir} for the proof of Lemma \ref{self-intersection inequalities} are contained in Proposition 3.1 and its proof. Specifially, we direct the reader to Lemma 3.6, Claims 4.7 and 4.9, Remarks 5.9 and 5.11 and inequality (4.3.3) of that paper. We warn the reader that there are small differences between the notation used here and in \cite{Sapir}. In particular, in that work each of the $\tau$- and $\beta$- arcs is contained in a pants cuff, whereas our $\tau$- and $\beta$- arcs include arcs from seam points to pants cuffs. Note that none of the inequalities in Lemma \ref{self-intersection inequalities} are affected by this discrepancy, and the differences in terminology will no longer be relevant. We use our notation for the sake of simplicity in the proof of Theorem \ref{thm: shortest metric}.

We need one additional estimate not contained in \cite{Sapir}.

\begin{lem}
\label{choosing twisting}
Suppose that $n\ne 0$. 
With $\phi$ equal to the product of powers of Dehn twists about the pants curves found in Lemma \ref{self-intersection inequalities}, the twist coordinates satisfy
\[
\sum_i |b_i| \le \frac{2C \cdot k}{n} + C\sqrt{k}~,
\]
where $C$ is the constant from Lemma \ref{self-intersection inequalities} depending only on the topology of $\S$.
\end{lem}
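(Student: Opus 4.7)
The plan is to combine the inequality $\sum_{i > j}(b_i - b_j) \le C \cdot k$ from Lemma~\ref{self-intersection inequalities} (which effectively measures the pairwise dispersion of the twist numbers) with the bound $\sum_i b_i \le C\sqrt{k}$ (which controls the mean). The sum $\sum|b_i|$ is a kind of deviation of the $b_i$'s from zero, and we will relate it to their deviation from the mean $\bar{b}=S/n$, where $S=\sum_i b_i$. The crucial observation is that since $b_1 \le b_2 \le \ldots \le b_n$, all terms $b_i - b_j$ for $i>j$ are non-negative, so
\[
\sum_{i,j}|b_i-b_j| \;=\; 2 \sum_{i>j}(b_i-b_j) \;\le\; 2C\cdot k.
\]

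The main step is to extract $\sum|b_i|$ from the pairwise-difference bound by a double application of the triangle inequality. First, for each index $i$,
\[
|n b_i - S| \;=\; \Bigl|\sum_{j=1}^n (b_i - b_j)\Bigr| \;\le\; \sum_{j=1}^n |b_i - b_j|.
\]
Summing over $i$ and using the identity above yields $\sum_{i=1}^n |nb_i - S| \le 2C\cdot k$. Next, the reverse triangle inequality gives $|n b_i - S| \ge n|b_i| - |S|$, so after summing in $i$,
\[
n\sum_i |b_i| - n|S| \;\le\; 2C\cdot k.
\]
Finally, the second bullet of Lemma~\ref{self-intersection inequalities} gives $|S| \le C\sqrt{k}$ (using that $S>0$ when $n\ne 0$), so dividing by $n$ produces
\[
\sum_i |b_i| \;\le\; \frac{2C\cdot k}{n} + C\sqrt{k},
\]
as desired.

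The proof is essentially a clean inequality manipulation, so there is no serious obstacle; the only subtle point is choosing the right intermediate quantity to bridge the two bounds in Lemma~\ref{self-intersection inequalities}. The quantity $n b_i - S$ is precisely what the pairwise-dispersion hypothesis controls (after triangle inequality), and the mean bound $|S|\le C\sqrt{k}$ is exactly the correction needed to pass from deviations around the mean to deviations from zero. The hypothesis $n \ne 0$ is used implicitly both to divide by $n$ at the end and to invoke the second bullet of Lemma~\ref{self-intersection inequalities}.
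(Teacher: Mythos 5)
Your proof is correct, and it reaches the stated bound by a genuinely different route from the paper. The paper encodes the partial sums $B_i=\sum_{j\le i}b_j$ as vertices of a convex polygon $P$ in the plane, computes $\mathrm{Area}(P)=\sum_{i>j}(b_i-b_j)$ by a triangle decomposition, and then bounds $\mathrm{Area}(P)$ from below by $n\sum_{b_i\le 0}|b_i|$ using four inscribed triangles; combining with $\sum b_i\le C\sqrt{k}$ then controls the positive part separately. You instead work purely algebraically: using the ordering $b_1\le\cdots\le b_n$ to convert the dispersion bound into $\sum_{i,j}|b_i-b_j|\le 2Ck$, then the identity $nb_i-S=\sum_j(b_i-b_j)$ together with the triangle and reverse triangle inequalities to get $n\sum_i|b_i|\le 2Ck+n|S|$, and finally $0<S\le C\sqrt{k}$ from the second bullet. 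Each step checks out, and the hypotheses are used exactly where you say they are (the positivity of $S$ is indeed needed to conclude $|S|\le C\sqrt{k}$, and $n\ne 0$ to divide). Your argument is shorter and avoids the geometric construction entirely, arriving at the identical constant $2Ck/n+C\sqrt{k}$; the paper's polygon picture is perhaps more suggestive of where the dispersion inequality comes from, but for the purposes of this lemma your inequality manipulation is a clean and complete substitute.
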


\begin{proof}

Note that all of the inequalities from Lemma \ref{self-intersection inequalities} hold. In particular, we have $0 < \sum b_i  \leq C \sqrt{k}$. 

We decompose our sum as $$\sum_i |b_i| = \sum_{b_i \leq0} |b_i| + \sum_{b_i > 0} b_i~,$$

\noindent and first show that $\sum_{b_i \leq 0} |b_i| \leq \frac{C \cdot k}{n}$. To simplify notation we let $$B_i = \sum_{j=1}^{i} b_j~.$$ We define a convex polygon $P$ in ${\bf R}^2$ whose vertices are $(i, B_i)$ and $(n- i, B_n - B_i)$  for $1 \leq i \leq n$ (see Figure \ref{fig:polygon} below).

\begin{figure}[h!]
\begin{overpic}[trim = 1.15in 2in 1.15in 1.75in, clip=true, totalheight=0.27\textheight]{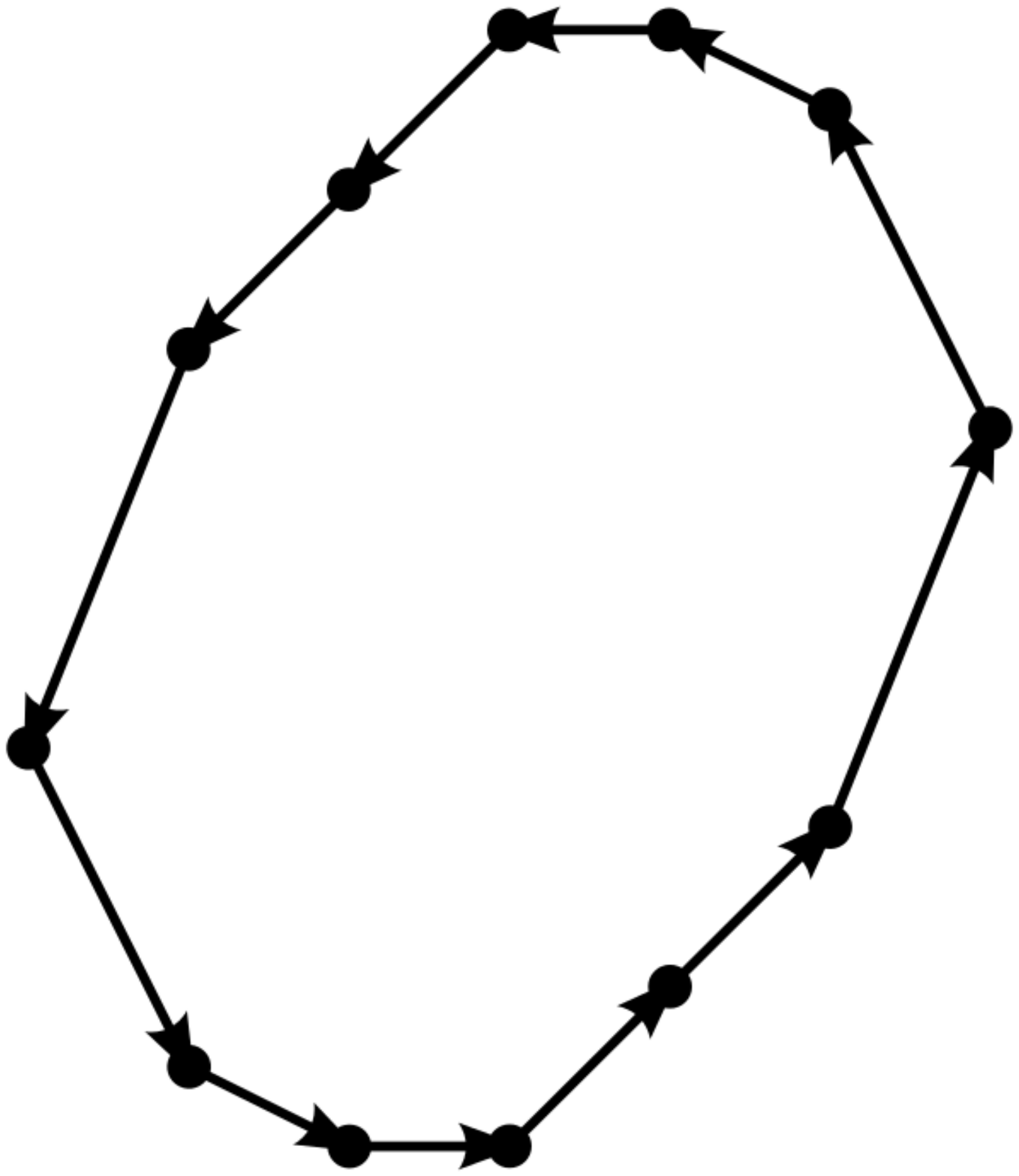}
\put(-14,36){\scriptsize{$(0, 0)$}}
\put(-3,8){\scriptsize{$(1, B_1)$}}
\put(15,-3){\scriptsize{$(2, B_2)$}}
\put(38,-3){\scriptsize{$(3, B_3)$}}
\put(58,13){\scriptsize{$(4, B_4)$}}
\put(72,27){\scriptsize{$(5, B_5)$}}
\put(85,60){\scriptsize{$(6, B_6)$}}
\put(75,90){\scriptsize{$(5, B_6 - B_1)$}}
\put(55,102){\scriptsize{$(4, B_6 - B_2)$}}
\put(13,100){\scriptsize{$(3, B_6 - B_3)$}}
\put(0,85){\scriptsize{$(2, B_6 - B_4)$}}
\put(-15,70){\scriptsize{$(1, B_6 - B_5)$}}
\end{overpic}
\caption{ }\label{fig:polygon}
\end{figure}

We remark that the idea for encoding the combinatorics of a collection of arcs and curves in a convex polygon in this fashion is due to Ser-Wei Fu, and the authors learned this technique during the course of several conversations with him. Similar techniques have been used in \cite{Tang-Webb}. We now calculate the area of $P$ using the triangles $T_i$ contained within $P$ and defined as follows: $T_i$, for $1 \leq i \leq n-1$, is the triangle with vertices $(0,0)$, $(i, B_i)$, and $(i+1, B_{i+1})$ (see Figure \ref{fig:polygon_with_tris}). 

\begin{figure}[h!]
\begin{overpic}[trim = 1.15in 2in 1.15in 1.75in, clip=true, totalheight=0.27\textheight]{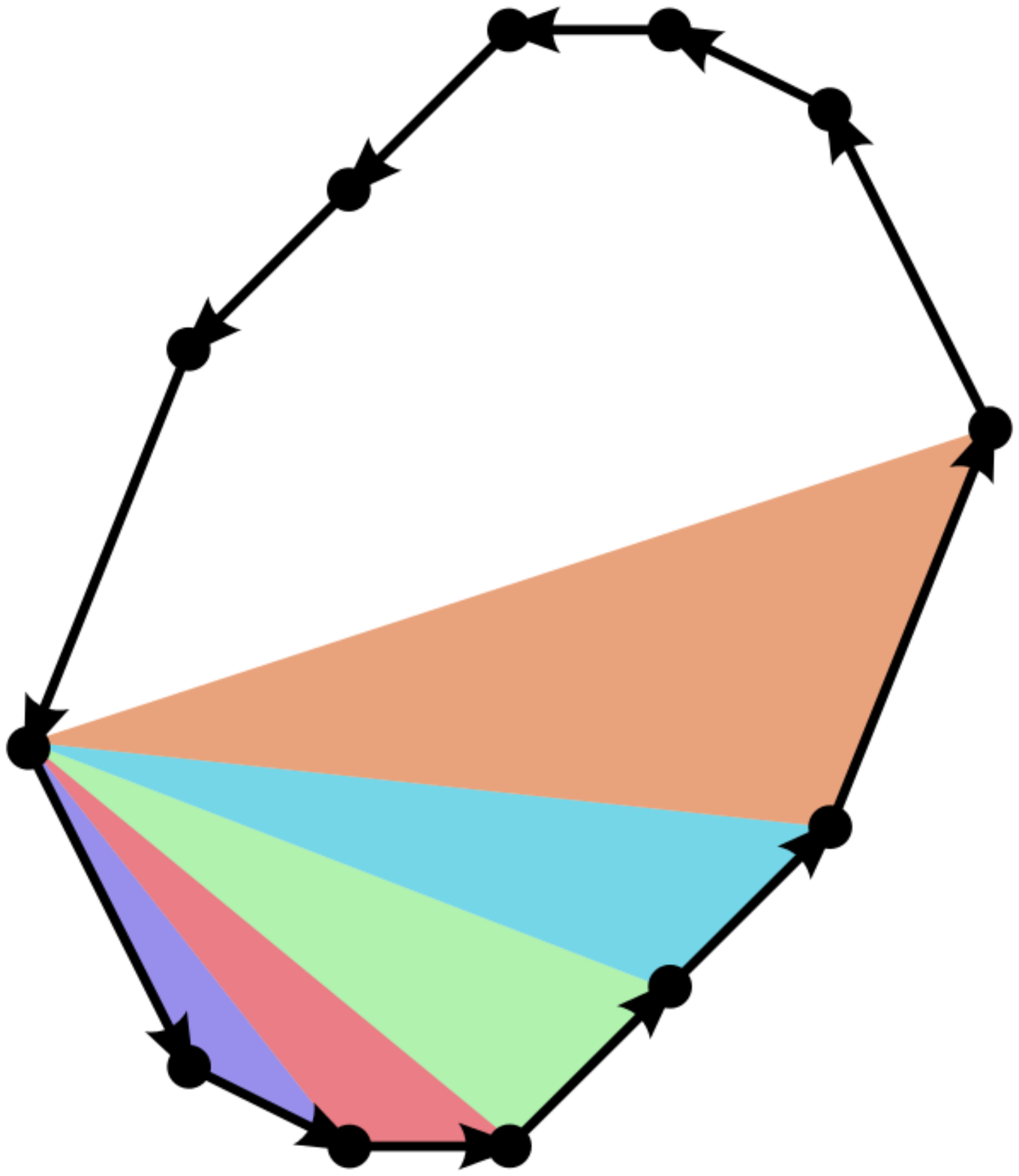}
\put(17.5,10){\scriptsize{$T_1$}}
\put(27,9){\scriptsize{$T_2$}}
\put(38.5,12){\scriptsize{$T_3$}}
\put(50,23){\scriptsize{$T_4$}}
\put(60,41){\scriptsize{$T_5$}}
\end{overpic}
\caption{ }\label{fig:polygon_with_tris}
\end{figure}

Thus, $$\sum_{i =1}^{n-1} Area(T_i) = \frac{1}{2} Area(P)~.$$ A calculation shows that $Area(T_i) = \frac{1}{2}[(b_{i+1} - b_1) + (b_{i+1} - b_2) + \cdots + (b_{i+1} - b_i)]$. Summing over all $1 \leq i \leq n-1$ we see that 

$$ Area (P) = 2 \sum_{i =1}^{n-1} Area(T_i) = \sum_{i > j} b_i - b_j~.$$

By Lemma \ref{self-intersection inequalities}, we have

\begin{equation}\label{eq:area_P}
Area (P) = \sum_{i > j} b_i - b_j \leq C \cdot k~.
\end{equation}

Using the four triangles $\mathfrak{T}_1, \mathfrak{T}_2, \mathfrak{T}_3$, and $\mathfrak{T}_4,$ in Figure \ref{fig:polygon_area} below, each of whose height is at least $\sum_{b_i \leq 0} |b_i|$. We conclude 

\begin{align}
Area(P) & \geq [Area(\mathfrak{T}_1) + Area(\mathfrak{T}_4)] + [Area(\mathfrak{T}_2) + Area(\mathfrak{T}_3)] \nonumber \\
& \geq  2 \left(\frac{1}{2} \, t \sum_{b_i \leq 0} |b_i| \right)+ 2\left( \frac{1}{2} (n-t) \sum_{b_i \leq 0} |b_i| \right) = n \sum_{b_i \leq 0} |b_i|~, \nonumber
\end{align}
where $t = \#\{b_i \; | \; b_i\le 0\}$.

\begin{figure}[h!]
\begin{overpic}[trim = 1.15in 2in 1.15in 1.75in, clip=true, totalheight=0.27\textheight]{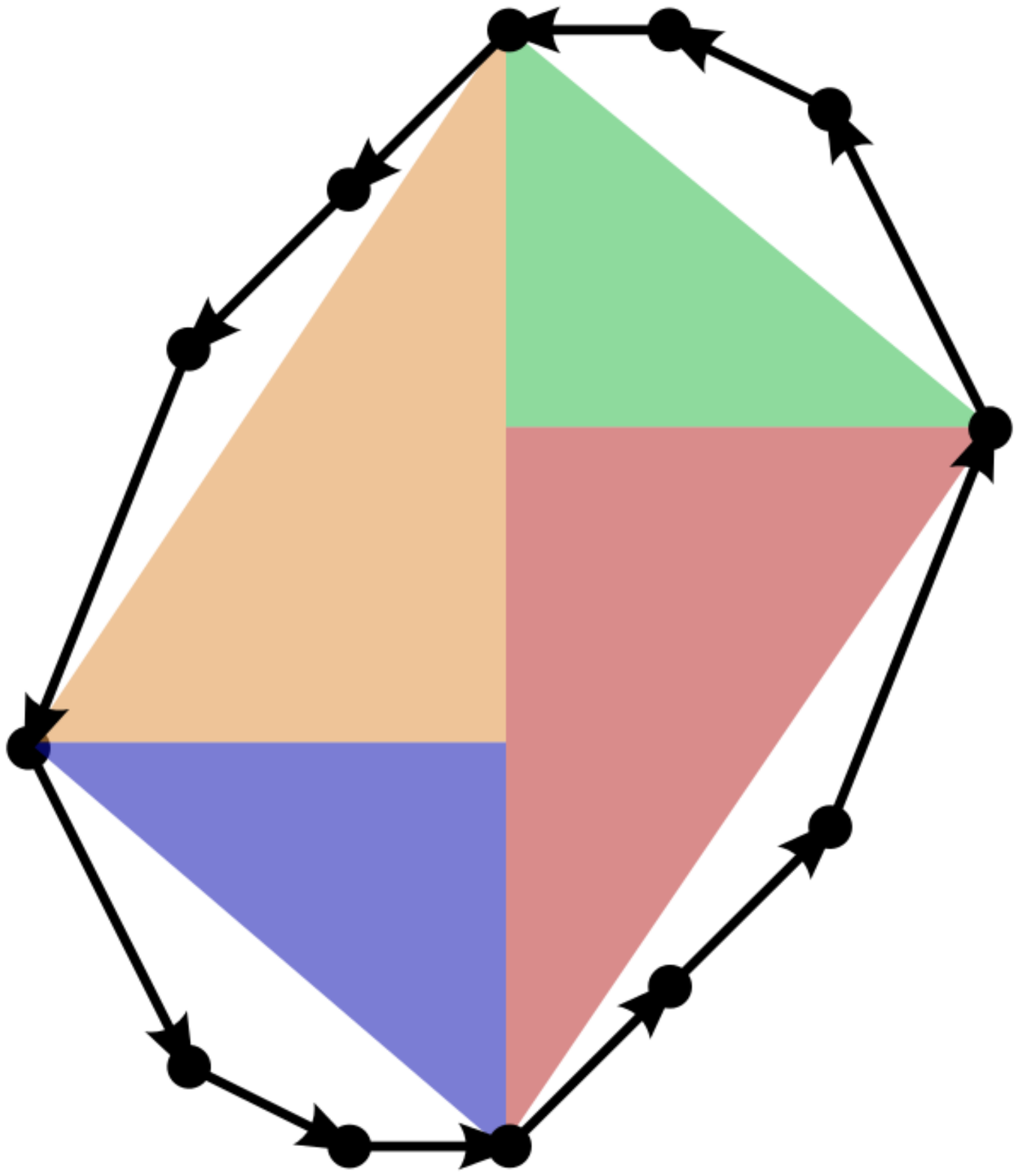}
\put(28,25){\large{$\mathfrak{T}_1$}}
\put(54,42){\large{$\mathfrak{T}_2$}}
\put(54,74){\large{$\mathfrak{T}_3$}}
\put(28,55){\large{$\mathfrak{T}_4$}}
\end{overpic}
\caption{ }\label{fig:polygon_area}
\end{figure}

Combining this lower bound on $Area(P)$ with Equation \ref{eq:area_P}, we have

$$n \sum_{b_i \leq 0} |b_i| \leq Area(P) \leq  C \cdot k~,$$ 

\noindent giving the desired bound on $\sum_{b_i \leq 0 } |b_i|$. We can rewrite this bound as $-\frac{C \cdot k}{n} \leq \sum_{b_i<0} b_i$  to obtain 

$$ -\frac{C \cdot k}{n} + \sum_{b_i \geq 0} b_i \leq \sum_{b_i<0} b_i +  \sum_{b_i \geq 0} b_i \leq C\sqrt{k}~,$$
by Lemma \ref{self-intersection inequalities}.

Thus, $\sum_{b_i \geq 0} b_i \leq \frac{C \cdot k}{n} + C\sqrt{k} $ and $\sum_i |b_i| \leq \frac{2C \cdot k}{n} + C\sqrt{k}$.

\end{proof}

\section{Proof of Theorem \ref{thm: shortest metric}}\label{sec:short geodesics}
Note that $\rho$ is a metric in which $\ell_\rho(\gamma) \leq C_3 \sqrt k$ if and only if $\ell_{\phi^{-1} \cdot \rho}(\phi \cdot \gamma) \leq C_3 \sqrt k$, so it is enough to find a metric in which the length of $\phi \cdot \gamma$ is bounded. For notational convenience, in what follows we replace $\phi \cdot \gamma$ with $\gamma$, so that in particular we now have that the inequalities from Section \ref{sec: inequalities} apply verbatim to $\gamma$. We will pick a hyperbolic metric on $\S$ by choosing appropriate lengths for the pants curves, and we bound the length of $c(\gamma)$ in that metric using the twist parameters about each $\alpha \in \p$. The inequalities in Lemmas \ref{self-intersection inequalities} and \ref{choosing twisting} will allow us to finish the proof.

We record some evident hyperbolic geometry facts to aid in our analysis. The curve formed by a component of the boundary of the $L$-neighborhood of a geodesic of length $\ell$ has length $\ell\cosh (L)$. We will call such a curve an \emph{equidistant curve}.

\begin{lem}
\label{hyperbolic exercise}
Suppose that the curves in the boundary of a pair of pants have length at most $C$. 
Then the distance from a seam point to a neighboring equidistant curve of length $C$ is at most $D$, where $D$ is a constant depending only on $C$. 
\end{lem}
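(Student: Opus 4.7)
The plan has two main components: identifying the seam point $p$ as the foot of a common perpendicular, and then estimating its location using a hyperbolic pentagon identity.

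Let $s$ denote the seam edge connecting cuffs $\beta$ and $\gamma$, and write $\alpha$ for the remaining cuff. First, I claim that $p$ is the foot of the common perpendicular from $\alpha$ to $s$. Indeed, the simple geodesic arc on the pair of pants with both endpoints on $\alpha$ that separates $\beta$ from $\gamma$ is preserved by the orientation-reversing isometric involution fixing the three seam edges, hence meets $s$ orthogonally at its midpoint; one half of this loop then realizes the common perpendicular from $\alpha$ to $s$. Consequently, the distance from $p$ to the equidistant curve $E_\beta$ of length $C$ around $\beta$ is realized along $s$ itself, since $s$ is perpendicular to $\beta$ and hence to every concentric equidistant curve about $\beta$.

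To estimate $b := d(\beta, p)$, we cut the hexagonal fundamental domain for half the pair of pants along this common perpendicular, obtaining a right-angled pentagon with cyclic sides $\alpha/4$, $s_\gamma$, $\beta/2$, $b$, and $P := d(\alpha, p)$. The classical right-angled pentagon identity $\cosh(a_i) = \coth(a_{i-1})\coth(a_{i+1})$, applied to the side of length $b$, yields directly
\[
\cosh(b) = \coth(\beta/2) \cdot \coth(P).
\]
The collar lemma applied to $\alpha$ gives $P \geq w(\ell(\alpha)) \geq w(C)$, where $w(\ell) := \sinh^{-1}(1/\sinh(\ell/2))$, and a short computation shows $\coth(w(C)) = \cosh(C/2)$. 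Thus $\coth(P) \in [1,\, \cosh(C/2)]$, which sandwiches $\cosh(b)$ between $\coth(\beta/2) = \cosh(w(\beta))$ and $\cosh(C/2)\coth(\beta/2)$. The lower bound gives $b \geq w(\beta)$, so $p$ lies outside the standard collar about $\beta$, while the upper bound combined with $\cosh(x)\cosh(y) \leq \cosh(x+y)$ for $x,y \geq 0$ yields $b \leq w(\beta) + C/2$.

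Finally, $E_\beta$ lies at distance $L_C := \cosh^{-1}(C/\ell(\beta))$ from $\beta$. A routine check comparing $L_C$ and $w(\beta) = \cosh^{-1}(\coth(\ell(\beta)/2))$ over $\ell(\beta) \in (0, C]$ shows that $|L_C - w(\beta)|$ is uniformly bounded by a constant $D_1(C)$: as $\ell(\beta) \to 0$ both are asymptotic to $\log(1/\ell(\beta))$ with limiting difference $\log(2/C)$, while at $\ell(\beta) = C$ we have $L_C = 0$ and $w(\beta) = w(C)$. Combining the two estimates,
\[
d(p, E_\beta) \;\leq\; |b - L_C| \;\leq\; |b - w(\beta)| + |w(\beta) - L_C| \;\leq\; C/2 + D_1(C) \,=:\, D.
\]
The only real content is the clean pentagon identity $\cosh(b) = \coth(\beta/2)\coth(P)$; once this is in hand the remaining inequalities are elementary manipulations of hyperbolic trigonometric expressions.
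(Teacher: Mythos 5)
Your argument is correct, but it is a genuinely different proof from the one in the paper. The paper disposes of this lemma in one sentence: the distance in question is a continuous function of the triple of boundary lengths that extends continuously to the compact cube $[0,C]^3$ (i.e.\ through the cusped degenerations), hence is bounded. You instead compute everything: you identify the seam point $p$ on the seam $s_{\beta\gamma}$ as the foot of the common perpendicular from $\alpha$ to $s_{\beta\gamma}$ (the symmetry argument via the reflection fixing the seams is exactly right), cut the half-pants hexagon along that perpendicular, and read off $\cosh(b)=\coth(\ell(\beta)/2)\coth(P)$ from the right-angled pentagon relation $\cosh(s_i)=\coth(s_{i-1})\coth(s_{i+1})$ --- this identity is correctly applied, since the two sides adjacent to $b$ in your pentagon really are $\ell(\beta)/2$ and $P$. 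What your approach buys is an explicit constant and the sharper localization $w(\ell(\beta))\le b\le w(\ell(\beta))+C/2$ of the seam point relative to the collar of $\beta$; what it costs is the trigonometric bookkeeping, and note that your final step (bounding $|L_C-w(\ell(\beta))|$ uniformly over $\ell(\beta)\in(0,C]$ by checking the common $\log(1/\ell(\beta))$ asymptotics) is itself a one-variable instance of exactly the continuity-up-to-the-cusp argument the paper uses globally.

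Two small points to tighten. First, the pentagon side you label $\alpha/4$ is not a quarter of $\alpha$ in general --- the foot of the common perpendicular bisects the side of the hexagon lying on $\alpha$ only when $\ell(\beta)=\ell(\gamma)$ --- but this side does not enter the identity you use, so nothing is affected. Second, the inequality $P\ge w(\ell(\alpha))$ does not follow from the bare statement of the collar lemma; you also need to know that the seam point (equivalently, the seam $s_{\beta\gamma}$) lies outside the standard collar of $\alpha$. This is true and is implicit in the usual hexagon-based proof of the collar lemma; alternatively it follows from the hexagon relations, which give $\sinh(P)\sinh(\ell(\alpha)/2)=\bigl(\cosh^2(\ell(\beta)/2)+\cosh^2(\ell(\gamma)/2)+2\cosh(\ell(\alpha)/2)\cosh(\ell(\beta)/2)\cosh(\ell(\gamma)/2)\bigr)^{1/2}\ge 2>1$. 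A sentence to this effect would close the only real gap.
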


\begin{proof}
The given distance is a continuous function of the lengths of the three geodesic boundary components that extends continuously to $[0,C]^3$. 
\end{proof}

\begin{lem}
\label{length estimate lemma}
Suppose that the length of $\alpha$ is $\epsilon$, and that $\epsilon\le 1$. Then the lengths of the $\beta$-arcs satisfy:
\begin{align}
\ell(\beta_\alpha) & \le \frac{\epsilon}{2} \sum_i |b_i| \ + \ 2n \log\left(\frac{2}{\epsilon} \right) +O(n)
\end{align}

For the $\tau$-arcs we have:
\begin{align} 
\ell(\tau_\alpha) & \le \frac{\epsilon}{2} \sum_j t_j + 2 \sum_j \log (t_j) +O(m)~.
\end{align}
\end{lem}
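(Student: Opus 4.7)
The plan is to upper-bound $\ell(\beta_\alpha)$ and $\ell(\tau_\alpha)$ by constructing, for each constituent arc, an explicit non-geodesic representative in the correct free homotopy class (rel endpoints) and estimating its length; since the geodesic representative of an arc minimizes length in its homotopy class, the desired inequalities follow. All estimates take place in a Fermi collar around the cuff $\alpha$ of length $\epsilon \le 1$, in which the hyperbolic metric reads $ds^2 = dr^2 + \cosh^2(r)\,dt^2$ with $t \in \R/\epsilon\Z$ parametrizing $\alpha$ by arclength; in particular, the equidistant curve at signed distance $s$ from $\alpha$ has length $\epsilon\cosh s$. The crucial geometric input is that by Lemma~\ref{hyperbolic exercise}, each seam point lies within a bounded distance of the equidistant curve of length $C$ (a uniform bound on cuff lengths), which sits at distance $\operatorname{arccosh}(C/\epsilon) = \log(2/\epsilon) + O(1)$ from $\alpha$.

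For a $\beta$-arc $\beta_i$, I use the representative that travels radially (perpendicular to $\alpha$) from the seam point down to $\alpha$, wraps along $\alpha$ through signed arclength $b_i\epsilon/2$ (i.e., $|b_i|$ boundary edges on $\alpha$, in the sign prescribed by $b_i$), crosses $\alpha$, and exits radially to the seam point on the opposite side. Each radial excursion has length at most $\log(2/\epsilon) + O(1)$ and the wrap contributes exactly $|b_i|\epsilon/2$; summing over the $n$ arcs of $\beta_\alpha$ and absorbing constants into $O(n)$ yields the stated bound.

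For a $\tau$-arc $\tau_j$ the construction is the same except that the wrap takes place along an equidistant curve (on one side of $\alpha$) of length $\ell_j$, so the wrap contribution is $(t_j/2)\ell_j$ and the two radial pieces contribute $2\bigl(\log(2/\epsilon) - \operatorname{arccosh}(\ell_j/\epsilon)\bigr) + O(1) = -2\log\ell_j + O(1)$. I will pick $\ell_j$ to approximately minimize $-2\log\ell_j + (t_j/2)\ell_j$ subject to $\ell_j \ge \epsilon$: if $t_j \le 4/\epsilon$, take $\ell_j = 4/t_j$, giving total $2\log t_j + O(1)$; otherwise take $\ell_j = \epsilon$, giving total $(\epsilon/2)t_j + 2\log(1/\epsilon) + O(1) \le (\epsilon/2)t_j + 2\log t_j + O(1)$, using $\log(1/\epsilon) \le \log t_j$ in that regime. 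Summing over the $m$ arcs of $\tau_\alpha$ produces the second inequality.

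The main piece of bookkeeping is verifying that each constructed path is genuinely isotopic rel endpoints to the prescribed arc, i.e., that the wrap direction, the number of turns, and (for $\tau$-arcs) the side of $\alpha$ match the combinatorial data $b_i$ or $t_j$ from Section~\ref{sec: short metric prelim}. I expect this to be the most tedious step, but it is a routine unwinding of the definitions; once it is settled, the Fermi collar computation above yields both inequalities at once.
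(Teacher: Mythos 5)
Your proposal is correct and follows essentially the same route as the paper: construct an explicit representative of each arc that descends radially from the seam points (at distance $\log(2/\epsilon)+O(1)$ from $\alpha$ by Lemma \ref{hyperbolic exercise}) into the collar, wraps, and returns, optimizing the descent depth for the $\tau$-arcs (your equidistant curve of length $4/t_j$ versus the paper's $2/t_j$, with the same case split according to whether $t_j$ exceeds a constant times $1/\epsilon$). The paper likewise treats the rel-endpoints homotopy verification as routine, so there is no gap relative to its level of detail.
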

\begin{proof}
The length of an arc traveling from the equidistant curve of length $1$ to the equidistant curve of length $T$ is $$\cosh^{-1}(1/\epsilon) - \cosh^{-1}(T/\epsilon)~.$$ Note that the existence of such an arc requires that $1\ge T\ge \epsilon$. It is straightforward to check that $\cosh^{-1}(1/\epsilon) - \cosh^{-1}(T/\epsilon) \le \log(2/T)$ for $1\ge T \ge\epsilon$.  

The $\beta$-arc $\beta_i$ (when oriented) travels from the seam point on a seam edge neighboring $\alpha$ to the equidistant curve of length $1$, from the equidistant curve of length $1$ to the geodesic representative of $\alpha$, traverses $|b_i|$ edges of $G$, from the geodesic representative of $\alpha$ to the equidistant curve of length $1$, and finally from the equidistant curve of length $1$ to another seam point. The first and last leg of this journey can be done in $O(1)$ length by Lemma \ref{hyperbolic exercise}, and the remainder has length less than or equal to $2\log (2/\epsilon)+\epsilon|b_i|/2$. The length bound for $\beta_\alpha$ follows.

An oriented $\tau$-arc $\tau_j$ similarly travels from a seam point to the equidistant curve of length $1$, from the equidistant curve of length $1$ to the geodesic representative of $\alpha$, traverses $t_j$ edges of $G$, and finishes similarly to the way it came. This representative has length given by $2\log(2/\epsilon)+\epsilon t_j/2+O(1)$.

If $t_j/2 \le 1/\epsilon$ for some $j$, then there is a more efficient representative of the arc $\tau_j$, homotopic to the arc $\tau_j$ relative to its seam endpoints, but which manages to perform the `twisting' of $\tau_j$ around $\alpha$ with length contribution $O(1)$. More precisely, consider the representative of $\tau_j$ that travels from the starting seam point to the equidistant curve of length $1$, from the equidistant curve of length $1$ to the equidistant curve of length $2/t_j$ --- note that $1\ge 2/t_j \ge \epsilon$ as needed --- and finishes along a seam edge of $G$ as it came. This representative has length bounded above by $2\log(t_j) +1+ O(1)$.

On the other hand, if $t_j/2\ge 1/\epsilon$ then $2\log(2/\epsilon)\le 2\log(t_j)$ and we conclude
\begin{align*}
\ell(\tau_j) \le \left\{ 
	\begin{array}{ll}
		2\log (t_j) + 1+ O(1) \ , & \text{ if } t_j \le 2/\epsilon \\
\displaystyle		2\log(t_j) + \frac{\epsilon t_j}{2} + O(1) \ , & \text{ if } t_j \ge 2/\epsilon~.
	\end{array}
\right.
\end{align*}
Thus we have $$\ell(\tau_\alpha) \le \sum_j \max\left\{1, \frac{\epsilon t_j}{2} \right\} + 2 \sum_j \log (t_j) +O(m)~.$$
The claimed length estimate for $\tau_\alpha$ follows.
\end{proof}
 
\begin{lem}
\label{sums to products}
If $x_1,\ldots,x_n\in\R$ satisfy $\displaystyle \sum_j jx_j \le K$, then 
$ \log \displaystyle \prod_j x_j \le 2\sqrt{ K/e }.$
\end{lem}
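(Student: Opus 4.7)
The plan is to bound $\prod_j x_j$ from above using AM--GM on the weighted quantities $j x_j$, to control the resulting factorial via an elementary Stirling-type estimate, and finally to optimize the residual expression in $n$. I will assume $x_j > 0$ throughout, since otherwise $\log \prod_j x_j$ has no natural interpretation; this matches the intended use in the proof of Theorem~\ref{thm: shortest metric}.

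First I would set $y_j := j x_j$, so that $\sum_j y_j \le K$ and $\prod_j x_j = \prod_j y_j / n!$. The AM--GM inequality then gives $\prod_j y_j \le (\tfrac{1}{n} \sum_j y_j)^n \le (K/n)^n$, and therefore
\[
\prod_j x_j \le \frac{(K/n)^n}{n!}.
\]
Next I would invoke the elementary bound $n! \ge (n/e)^n$ (immediate from $e^n = \sum_{k \ge 0} n^k/k! \ge n^n/n!$) to conclude
\[
\prod_j x_j \le \left( \frac{Ke}{n^2} \right)^{\!n},
\]
and hence, after taking logarithms, $\log \prod_j x_j \le n \log(Ke/n^2) = n \log K + n - 2 n \log n$.

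The final step is to show that this last expression is bounded above by $2\sqrt{K/e}$ uniformly in $n$. A direct calculus computation with $f(n) = n\log K + n - 2 n \log n$ yields $f'(n) = \log K - 2\log n - 1$, which vanishes at $n_* = \sqrt{K/e}$; since $f''(n) = -2/n < 0$, this is the global maximum, and substitution gives $f(n_*) = 2\sqrt{K/e}$, completing the argument.

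I expect no real obstacle here: after AM--GM the problem reduces to an unconstrained one-dimensional optimization. The suspiciously clean constant $2\sqrt{K/e}$ in the statement is itself a hint that AM--GM together with Stirling is the correct approach, and reassuringly the coarser inequality $n! \ge (n/e)^n$ already produces exactly this constant, so no sharper Stirling estimate is needed.
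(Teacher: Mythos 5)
Your proof is correct, and it follows the same overall skeleton as the paper's: both arguments reduce to the bound $\prod_j x_j \le K^n/(n^n\, n!)$, both then invoke $n! \ge (n/e)^n$, and both finish by maximizing $n\log(eK/n^2)$ over $n$ to land on $2\sqrt{K/e}$. The one genuine difference is how you obtain the reduction for fixed $n$: the paper uses Lagrange multipliers to locate the critical point $x_j = K/(nj)$ of $\log\prod_j x_j$ on the constraint surface $\sum_j jx_j = K$, and then asserts (without proof) that this critical point is a maximum; you instead substitute $y_j = jx_j$ and apply AM--GM directly, which delivers the same bound $\prod_j y_j \le (K/n)^n$ with no need to verify that a critical point is a global maximum. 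Your route is the more elementary and, strictly speaking, the more complete one, since it sidesteps the ``easily seen to be a maximum'' step entirely. The only hypothesis you add --- positivity of the $x_j$ --- is harmless and implicitly needed for the statement anyway; in the application the $x_j$ are the twisting numbers $t_j \ge 1$.
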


\begin{proof}
Let $f(x_1,\ldots,x_n)=\log \displaystyle \prod_j x_j$ and $g(x_1,\ldots,x_n)=\displaystyle \sum_j jx_j$. The method of Lagrange multipliers implies that the critical points of $f$ subject to the condition that $g(x_1,\ldots,x_n)=K$ satisfy $$\lambda \nabla f = \nabla g$$ for some $\lambda \in \R$. This implies that $x_j=\lambda/j$ for each $j=1,\ldots,n$ and the restriction on $g$ implies that $\lambda=K/n$. This yields the value $$f(x_1,\ldots,x_n)=\log \frac{K^n}{n^nn!}~,$$easily seen to be a maximum for $f$ restricted to $g=K$. We use the estimate $n!\ge (n/e)^n$ and optimize using calculus of one variable: $f(x_1,\ldots,x_n) \le n \log (eK/n^2)$, and setting the derivative of this last expression equal to zero gives $n=\sqrt{K/e}.$ This implies that $f(x_1,\ldots,x_n)\le 2\sqrt{K/e}$ for all $n$, as desired.
\end{proof}

We are now ready for the proof of Theorem \ref{thm: shortest metric}. 

\begin{proof}[Proof of Theorem \ref{thm: shortest metric}]
Recall that $m$ and $n$ are the numbers of $\tau$- and $\beta$- arcs, respectively. Suppose first that $n\ne 0$. Choosing any convenient hyperbolic metric on $\S$ so that the length of each pants curve is at most $1$, and so that the length of $\alpha$ is $\epsilon \le 1$, Lemmas \ref{length estimate lemma} and \ref{choosing twisting} imply that we have 
\begin{align*}
\ell(\tau_\alpha \cup \beta_\alpha) & \le \frac{\epsilon}{2} \sum_i |b_i| \ + \ 2n \log\left(\frac{2}{\epsilon} \right) +O(n) + \frac{\epsilon}{2} \sum_j t_j + 2 \sum_j \log t_j +O(m) \\
& \le C \frac{\epsilon \ k}{n} + \frac{C}{2}\epsilon \sqrt{k} + 2 n\log \left(\frac{2}{\epsilon}\right) + O(n) + \frac{\epsilon}{2} \sum_j t_j + 2\sum_j \log t_j +O(m)~.
\end{align*}
The theorem will follow from making a choice of $\epsilon$ that ensures each of these terms is $O(\sqrt{k})$. We refine our choice $$ \epsilon = \min\left\{ \frac{n}{\sqrt{k}} , 1\right\}$$ for the length of $\alpha$. We check the relevant inequalities:\\

\begin{enumerate}

\item If $n\ge \sqrt{k}$ then $k/n \le \sqrt{k}$ and $\epsilon =1$, so that $\epsilon k/n$ is $O(\sqrt{k})$; whereas if $n\le \sqrt{k}$ then $\epsilon =n/\sqrt{k}$ and $\epsilon k/n$ is $O(\sqrt{k})$. In either case, $\epsilon k /n$ is $O(\sqrt{k})$.
\item Evidently $\epsilon \sqrt{k} \le n$, which is $O(\sqrt{k})$ by Lemma \ref{Jenya's pants}. 
\item A calculus exercise shows that $n\log(2\sqrt{k}/n) $ is $O(\sqrt{k})$.
\item We have $n$ is $O(\sqrt{k})$ by Lemma \ref{Jenya's pants}.
\item When $n\ge \sqrt{k}$ then $k/n \le \sqrt{k}$ and $\epsilon =1$, so that by the last bullet point of Lemma~\ref{Jenya's pants} we see $\displaystyle \frac{\epsilon}{2} \sum_j t_j \le \frac{C \cdot k}{2n}$, which is $O(\sqrt{k})$. Otherwise, we have $\displaystyle \frac{\epsilon}{2} \sum_j t_j \le \frac{n}{\sqrt{k}} \cdot \frac{C \cdot k}{n}$, which is $O(\sqrt{k})$.
\item Using Lemma \ref{self-intersection inequalities} again we have $\displaystyle \sum_j j t_j = \sum_{t_j < 4} jt_j + \sum_{t_j \geq 4} jt_j  \le 3m^2+ C \cdot k \leq C' \cdot k$, and Lemma \ref{sums to products} implies that $\displaystyle \sum_j\log t_j$ is $O(\sqrt{k})$.
\item Lemma \ref{self-intersection inequalities} implies that $m$ is $O(\sqrt{k})$.
\end{enumerate}
Note that the above choice of $\epsilon$ satisfies $\epsilon \ge 1/\sqrt{k}$.

If, on the other hand, $n=0$, then we make the choice $\epsilon = 1/\sqrt{k}$. By Lemma \ref{length estimate lemma} we have 
\begin{align*}  
\ell(\tau_\alpha)  \le \frac{\epsilon}{2} \sum_j t_j + 2 \sum_j \log (t_j) +O(m) ~.
\end{align*}
 Note that $\sum t_j$ is $O(k)$ as in (6) above, so that $\epsilon \sum t_j$ is $O(\sqrt{k})$. The other terms follow as in (6) and (7) above. 

We conclude that $\ell(\beta_\alpha \cup \tau_\alpha)$ is $O(\sqrt{k})$, with constants depending only on the topology of $\S$, and that the geodesic pants cuff $\alpha$ has length at least $1/\sqrt{k}$. Since $\gamma$ is the union of $\tau$- and $\beta$- arcs, we apply the above analysis one-by-one at each of the pants cuffs. The desired bound on $\ell(\gamma)$ follows.

As for the lower bound on the injectivity radius, recall that the collar lemma states that a simple closed curve of length $l$ has an embedded tubular neighborhood of width $\log(\coth(l/2))$. If $c$ is any simple closed geodesic, it is either a cuff of $\p$, or it must intersect at least one cuff of $\p$; thus either the desired lower bound follows immediately or $c$ has length at least $\log(\coth(1/2\sqrt{k}))$, which is larger than $1/\sqrt{k}$ for all $k\geq 2$.  When $k=1$, $\log(\coth(1/2)) > 1/2$; the theorem follows.

Finally, we address the case where $\S$ has cusps or non-compact ends. In the case of non-compact flaring ends, simply cut off the open ends to obtain a compact surface with totally geodesic boundary in place of each flaring end. Let $\S(\gamma)$ denote the compact hyperbolic surface for which $\gamma$ admits the desired short representative, and suppose $\S$ has $h$ boundary components, labeled $b_{1},...,b_{h}$. Then given $\epsilon < 1/\sqrt{k}$ and $A \subseteq \left\{1,..., h\right\}$, let $\S(\gamma)_{\epsilon, A}$ denote the hyperbolic surface for which each non-peripheral cuff of $\p$ has the same length and twisting coordinate as on the original surface $\S(\gamma)$, but so that we replace each of the boundary components whose indices lie within $A$, with one of length $\epsilon$. Then we claim that $\gamma$ admits a representative satisfying the desired length bound on $\S(\gamma)_{\epsilon, A}$ as well. Indeed, if $\alpha$ is a peripheral cuff of $\p$ on $\S(\gamma)_{\epsilon, A}$, the contribution to $\ell(\gamma)$ from arcs entering the (appropriately chosen) tubular neighborhood of $\alpha$ are all $\tau$ arcs. Thus, as above denoting these arcs as $\tau_{\alpha}$ we have
\[ \ell(\tau_\alpha) \le \frac{\epsilon}{2} \sum_j t_j + 2 \sum_j \log (t_j) +O(m). \]   
Hence, the first summand is smaller than the corresponding one on $\S(\gamma)$, and the second two summands are unaffected by the change. Thus, given $\delta>0$ small, choose $\epsilon$ sufficiently small so that $\ell_{\S(\gamma)_{\epsilon}}(\gamma)$ is within $\delta$ of the surface obtained by replacing with a cusp each of the boundary components with indices in $A$. Letting $\delta \rightarrow 0$, the theorem follows for cusped surfaces as well. 

\end{proof}

\subsection{Questions and further remarks}
\label{sec:Questions and remarks}

Given a particular curve (or family of curves), the construction in our proof of Theorem \ref{thm: lifts simply in n} often grants enough flexibility to strengthen the conclusion. We elaborate:

\begin{rem} \label{thick}  Given any $\epsilon >0$, there exists $N = N(\epsilon, \S)$ so that for any curve $\gamma$ with $i(\gamma, \gamma)\leq k$ whose length is minimized in the $\epsilon$-thick part of Teichm{\"u}ller space, there exists a cover of degree $N \cdot \sqrt{k}$ to which $\gamma$ lifts simply. 
\end{rem}

\begin{rem}
 \label{rem:Thick part condition}
The proof of Theorem \ref{thm: shortest metric} gives a sufficient condition for $\gamma$ to have short length (i.e.~on the order of $\sqrt{k}$) in the thick part of Teichmuller space, in terms of the twisting numbers of its $\tau$- and $\beta$- arcs:

Suppose that $c(\gamma)$ has $\tau$- and $\beta$- arcs with twisting numbers $t_1, \dots, t_m$ and $b_1, \dots, b_m$, respectively, and that there exists an $O(k)$ function $f$ so that 
 \[
  \sum t_i + \sum |b_i| \leq f(k) ~.
 \]
Note that such a function always exists by Lemmas \ref{self-intersection inequalities} and \ref{choosing twisting}. Then the proof of Theorem \ref{thm: shortest metric} produces a metric $\rho$ where 
 \[
  \ell_\rho(\gamma) \leq  C_3 \sqrt k
 \]
and so that the systole length of $(\S,\rho)$ is $O(\sqrt{k}/f(k))$. As above, we have $\deg(\gamma)$ is $O(f(k))$. In particular, if $f$ is $O(\sqrt{k})$ then $\deg(\gamma)$ is $O(\sqrt{k})$.
\end{rem}

\begin{ques}
Can an upper bound for the degree $\deg(\gamma)$ be made explicit as a function of the twisting numbers of the $\tau$- and $\beta$- arcs?
\end{ques}

\begin{rem} \label{liouville} One method to produce curves to which Remark \ref{thick} applies is to use `random geodesics' arising in Bonahon's theory of geodesic currents \cite{Bonahon}. Suppose $\S$ is compact, and fix a hyperbolic metric $\rho$ on $\S$. Apply the geodesic flow for time $T$ to a random unit tangent vector, and close up the curve arbitrarily with an arc of length bounded by the diameter of the surface. Bonahon's theory implies that these curves projectively limit to the Liouville current of $(\S,\rho)$, whose length function is minimized at $\rho \in \T(\S)$. Remark \ref{thick} implies that $\deg(\gamma_T)$ is growing at most as the square root of the self intersection. (Alternatively, note that by \cite[Theorem 1]{lalley}, $\gamma_T$ is almost surely of self-intersection roughly $T^2$ and length roughly $T$, and the Lipschitz map from $(\S,\rho)$ to $(\S,\rho_0)$ implies that $\deg(\gamma_T)$ is growing at most as the square root of the self-intersection number).
\end{rem}

\begin{ques} \label{random degree} What can be said about lower bounds for $\deg(\gamma_T)$? Is there almost surely linear growth in $T$? 
\end{ques}

\begin{ques} \label{stats of infima} What can be said about the statistics of the finitely many infima of length functions of curves with self-intersection $k$? By \cite[Cor.~1.4]{Basmajian} and Corollary \ref{max infimum}, each such infimum is roughly between $\log k$ and $\sqrt{k}$. What is the average?
\end{ques}

\textbf{Acknowledgements}. The authors thank Martin Bridgeman, Chris Leininger, Feng Luo, Kasra Rafi, and Tian Yang for helpful conversations. The first author was fully supported by NSF postdoctoral fellowship grant DMS-1502623.


\end{document}